\documentclass[a4paper,11pt]{article}

\usepackage[left=2.6cm, right=2.6cm, top=2.6cm, bottom=2.6cm]{geometry}

\usepackage[latin1]{inputenc}
\usepackage[T1]{fontenc}
\usepackage[english,ngerman]{babel}

\usepackage{amsthm}
\usepackage{amsmath}
\usepackage{amssymb}

\usepackage{paralist}
\usepackage[arrow, matrix, curve]{xy}

\usepackage{booktabs, array, dcolumn}
\usepackage{tabularx}

\usepackage{url}

\usepackage{float}
\usepackage{graphicx}
\usepackage{caption}
\usepackage{subcaption}
\usepackage{color}

\usepackage{dsfont}

\usepackage{verbatim}

\newtheoremstyle{j}%
{3pt}%
{3pt}%
{}%
{\parindent}%
{\bfseries}%
{.}%
{.5em}%
{}%

\theoremstyle{plain}


\newtheorem*{rem*}{Remark}
\newtheorem*{concl*}{Conclusion}
\newtheorem*{theorem*}{Theorem}
\newtheorem*{cor*}{Corollary}
\newtheorem*{algo*}{Algorithm}

\newtheorem{theorem}{Theorem}[section]
\newtheorem{lemma}[theorem]{Lemma}

\newtheorem{cor}[theorem]{Corollary}
\newtheorem{prop}[theorem]{Proposition}
\newtheorem{deff}[theorem]{Definition}
\newtheorem{rem}[theorem]{Remark}

\theoremstyle{definition}

\newtheorem*{example*}{Example}

\newtheorem*{prob*}{Problem}

\newcommand{\Hil}{\mathcal{H}}

\newcommand{\Fcalt}{\widetilde{\mathcal{F}}}
\newcommand{\Scal}{\mathcal{S}}

\newcommand{\Rcal}{\mathcal{R}}

\newcommand{\Z}{\mathbb{Z}}
\newcommand{\N}{\mathbb{N}}

\newcommand{\R}{\mathbb{R}}

\newcommand{\C}{\mathbb{C}}

\newcommand{\fhat}{\widehat{f}}

\newcommand{\psihat}{\widehat{\psi}}
\newcommand{\psitilde}{\widetilde{\psi}}

\newcommand{\phihat}{\widehat{\phi}}

\newcommand{\eps}{\varepsilon}

\newcommand{\bal}{\begin{align}}
\newcommand{\eal}{\end{align}}

\newcommand{\bM}{\begin{pmatrix}}
\newcommand{\eM}{\end{pmatrix}}


\DeclareMathOperator{\spann}{span}

\DeclareMathOperator{\suppp}{supp \,}

\numberwithin{equation}{section}

\newcommand{\operp}{\mathop{\bigcirc\kern-12.75pt\perp}\nolimits}

\numberwithin{equation}{section}

\allowdisplaybreaks

\newcommand*{\ti}{\textit}

\begin{document}
\selectlanguage{english}

\title{Generalized sampling reconstruction from Fourier measurements using compactly supported shearlets}
\author{
  Jackie Ma\footnote{ma@math.tu-berlin.de}\\
  Technische Universit\"{a}t Berlin\\ Department of Mathematics \\
  Stra\ss{}e des 17. Juni 136,   10623 Berlin
}

\maketitle

\begin{abstract}
In this paper we study the general reconstruction of a compactly supported function from its Fourier coefficients using compactly supported shearlet systems. We assume that only finitely many Fourier samples of the function are accessible and based on this finite collection of measurements an approximation is sought in a finite dimensional shearlet reconstruction space. We analyse this sampling and reconstruction process by a recently introduced method called \emph{generalized sampling}. In particular by studying the \emph{stable sampling rate} of generalized sampling we then show stable recovery of the signal is possible using an almost linear rate. Furthermore, we compare the result to the previously obtained rates for wavelets.
\end{abstract}


\section{Introduction}

A general task in sampling theory is the reconstruction of an object from finitely many measurements. Such a problem also appears in more applied fields such as signal processing and medical imaging, the latter being a main motivation of this paper with view to applications. A Hilbert space where such problems are often modeled in is the Hilbert space $L^2(\R^2)$ - the space of square integrable functions -, indeed, the reconstruction task in magnetic resonance imaging (MRI) is mostly modeled in  $L^2(\R^2)$. In this paper we assume the object of interest can be modeled by a compactly supported function $f \in L^2(\R^2)$ and the acquired samples are simply the Fourier coefficients of this function. Of course, a reconstruction of such a function $f$ can then be obtained by forming a truncated Fourier series using the sampled Fourier coefficients. However, physical constraints make it  impossible to sample all Fourier coefficients which causes an error in the reconstruction and possibly leads to non satisfactory reconstructions. For example the Gibbs phenomenon is a typical artifact that is strongly visible in the approximation if not enough Fourier coefficients are used in a Fourier series. Sampling Fourier coefficients and forming a reconstruction by a Fourier series means that sampling and reconstructing the signal is both done by using the same system. While the way we acquire the data is indeed usually fixed by the acquisition device, for instance in MRI, the reconstruction system is not. Very often additional information about the object is known, e.g. the shape or the regularity. Therefore it is plausible to use this extra information in the reconstruction scheme by choosing a suitable reconstruction system that favors a representation of this object.

\subsection{Sampling and reconstruction model}

As already mentioned at the beginning of the introduction, many reconstruction problems can be modeled in a Hilbert space $\Hil$ with inner product $\langle \cdot, \cdot \rangle$. The samples are then assumed to be acquired with respect to a fixed sampling system $\{s_1, s_2, \ldots \} \subset \Hil$ and the reconstruction is computed using another system $\{r_1,r_2, \ldots \} \subset \Hil$ which can be arbitrarily chosen. The concrete reconstruction problem can then be formulated as follows: Given finitely many linear measurements $\langle f, s_1 \rangle, \ldots, \langle f, s_M \rangle, M \in \N$ of a function $f \in \Hil$, find a reconstruction $f_N \in \spann \{r_1, \ldots, r_N\}, N \in \N$ that is numerically stable and whose error converges to zero as $N$ tends to infinity. 

Reconstruction problems of this type have already been studied, e.g. by Unser and Aldroubi in \cite{UnsAld} and later also by Eldar  in \cite{Eldar1, Eldar2} for the case $N=M$ where the used method is called \emph{consistent reconstruction}. We investigate this reconstruction problem by using \emph{generalized sampling} (GS) which was introduced by Adcock et al. in a series of papers \cite{AH1, AH3, AHP2}. In fact, GS can be seen as an extension of consistent reconstruction and a comparison of both methods is presented in \cite{AHP2}. The key difference of GS to consistent reconstruction is to allow $M$ and $N$ to vary independently of each other. This additional flexibility can be used to overcome some of the issues that are addressed in \cite{AHP2}. GS further provides a quantity called the \emph{stable sampling rate}, which represents the number of samples $M \in \N$ that are needed in order to obtain an approximation that is stable and convergent using the first $N \in \N$ reconstruction elements of the a priori chosen and ordered reconstruction system. We will introduce generalized sampling and in particular the stable sampling rate in sufficient detail in Section \ref{section:GeneralizedSampling}.

\subsection{Sparsifying systems}
 
The selection of a different system for reconstruction than the given sampling system can only be done faithfully if the structure of the underlying object is known. Indeed, it is well known that most natural images, in particular medical images, have a \emph{sparse representation} with respect to a \emph{wavelet basis} \cite{Dau}, meaning either many wavelet coefficients are zero or have a fast decay with respect to increasing scales. This sparse representation by wavelets is one of the reasons why they became very important in imaging problems, see e.g. \cite{JPEG}. 

However, over the years many other sparsifying systems similar to wavelets have arisen such as \emph{curvelets} \cite{CanDon} and \emph{shearlets} \cite{KutLabBook}. More importantly, these systems are known to outperform  wavelets in higher dimensions in the sense that both, curvelets and shearlets provide a so-called \emph{almost optimal sparse approximation rate} of \emph{cartoon-like} images which are a standard model for natural and medical images, see \cite{DonSparse, CanDon, KutLim}, while wavelets provable do not reach this rate.  For this reason, it is of utmost importance to study not only wavelets but in addition shearlets as a reconstruction system to approximate a compactly supported function based on its Fourier coefficients in the context of generalized sampling. Indeed, wavelets has been studied in \cite{AHP1,AHKM} see also Subsection \ref{sec:contribution}. 

Note that there exists a general notion of \emph{parabolic molecules} introduced by Grohs et al. \cite{ParMol} for which shearlets and curvelets are special instances. A natural question is then, whether our results can be extended to such systems and we will comment on this in Section \ref{sec:futurework}.

\subsection{Contribution and related work}\label{sec:contribution}

To enhance the quality of the Fourier inversion of the sampled data, great effort has been done in applications to improve the reconstruction process by changing the reconstruction basis or adapting the sampling process. For example in \cite{HealyWeaver} the authors introduced \emph{wavelet-encoding} which allows the user to directly sample wavelet coefficients instead of Fourier coefficients by adapting the acquisition device. However, as for now this method also has several shortcomings such as low signal-to-noise ratio \cite{Panych}.  On the other hand, the proposed method GS can be understood as a post processing method that does not require to change the acquisition device since the sparsifying transform will be incorporated in the reconstruction process and not in the sampling process. 

The use of GS to study the reconstruction problem from Fourier measurements has already been investigated for wavelets in 1D \cite{AHP1} and 2D \cite{AHKM}. The authors of the respective works showed, that for dyadic multiresolution analysis (MRA) wavelets the stable sampling rate is linear, meaning up to a constant one has to sample as many Fourier coefficients as one wants to reconstruct wavelet coefficients. Moreover, it was shown in \cite{AHP1} that a defiance of this rate leads to unstable approximations. 

The main result of our paper shows that the stable sampling rate is almost linear for sampling Fourier coefficients and reconstructing shearlet coefficients. The main challenge compared to the previous results of compactly supported MRA wavelets is that shearlets neither obey a classical scaling equation which enables an expansion of translations of a single function at highest scale nor form an orthonormal bases. However, both properties are heavily used in the proofs for the wavelet case. Therefore the proof requires a different strategy and is build upon good localization properties in frequency of the reconstruction system. Moreover, to the best of the authors knowledge there are so far no results for a stable sampling rate for redundant reconstruction systems that merely form a frame. In fact, the redundancy or instability measured  by the lower frame bound of the reconstruction system now becomes apparent and effects the stable sampling rate directly. Such a penalty does not occur when using orthonormal bases or Riesz bases. Therefore the rate that we obtain for shearlets is worse than the rate for wavelets.

Since the stable sampling rate is worse for shearlets than for wavelets we justify the theoretical gain of using shearlets in combination of GS by showing that the error of the GS shearlet reconstruction converges asymptotically faster to zero than the error of the GS wavelet reconstruction under some assumptions if both reconstructions are computed from the same number of measurements.

As for practical applications such as MRI, wavelets are often used as a reconstructions system and it has been shown that these often lead to superior results in terms of lower sampling rates and reduced artifacts  \cite{LusDonPau}. We will provide some numerical tests with MR data and deterministic subsampling patterns in the numerics section using shearlets and compare these to reconstruction methods based on wavelets and Fourier inversion to show the strength of using shearlets as a reconstruction system for the recovery problem from Fourier measurements.

\subsection{Notation}

In GS the sampling system and the reconstruction system are allowed to be almost completely arbitrary, in fact, assuming the frame property is enough. Recall that a countable sequence $(f_i)_{i \in I}$ is called a \emph{frame} (\cite{Chr}) for a Hilbert space $\Hil$ with inner product $\langle \cdot,\cdot \rangle$ if there exist $0<A\leq B< \infty$ such that
\begin{align*}
	A \| f \|^2 \leq \sum_{i \in I} | \langle f, f_i \rangle |^2 \leq B \|f \|^2, \quad \forall f \in \Hil.
\end{align*}
For most parts we will assume $\Hil$ to be the space of square integrable functions $L^2(\R^2)$ equipped  with the standard inner product $\langle f, g \rangle := \int f \overline{g} \, d \mu$ and induced norm $\| f \| := \sqrt{\langle f, f \rangle}$ for $f,g \in L^2(\R^2)$.

The Fourier transform of a function $f \in L^1(\R^2)$ is denoted by $\fhat$ and we use the definition
\begin{align*}
	\fhat(\xi) = \int_{\R^2} f(x) e^{-2 \pi i \langle x, \xi \rangle} \, d x, \quad \xi \in \R^2,
\end{align*}
and with a standard approximation argument we extend this definition to functions in $L^2(\R^2)$.

We will also often write "$\lesssim$" to indicate an inequality to hold up to some constant that is irrelevant or independent of all important parameters. Similarly "$\asymp$" should denote equality up to some constant.

\subsection{Outline}

We continue with a short review of generalized sampling in Section \ref{section:GeneralizedSampling}. Then we introduce shearlets and recall the concept of sparse approximation of cartoon-like functions in sufficient detail in Section \ref{section:Shearlets}. Section \ref{section:SampRecSpace} contains the precise definitions of the sampling and reconstruction spaces that are used for the main theorem that is formulated in Subsection \ref{section:MainResult}.  All proofs are then presented in Section \ref{section:Proofs}. Finally, in Section \ref{section:Numerics} we aim to numerically confirm the methodology and effectiveness of using  shearlets as a reconstruction system.

\section{Generalized sampling} \label{section:GeneralizedSampling}

Generalized sampling was introduced by Adcock et al. in  \cite{AH1, AH3, AHP2} as a reconstruction method for almost arbitrary sampling and reconstruction systems. In fact, similar to \emph{consistent reconstruction} \cite{UnsAld, Eldar1, Eldar2} these systems are assumed to form a frame. As already mentioned in the introduction, the key difference of generalized sampling and consistent reconstruction is to allow the number of measurements to vary independently of the number of reconstruction elements. This flexibility enables generalized sampling to overcome some of the barriers of consistent reconstruction \cite{AH1}.

\subsection{GS reconstruction method}

Given a \emph{sampling system} $\{s_1, s_2, \ldots \} \subset \Hil$ we define the sampling space $\Scal \subset \Hil$ as the closure of its span, i.e.
\begin{align*}
	\Scal = \overline{\spann} \{ s_k \, : \, k \in \N \}.
\end{align*}
The finite dimensional version of this sampling space is denoted by
\begin{align*}
	\Scal_M = \spann \{ s_1, \ldots, s_M \}, \quad M \in \N.
\end{align*}
The sampling vectors $\{ s_k \, : \,  k\in \N\}$ are used to model the measurements, in fact, we assume that the samples are given as linear measurements of the form 
\begin{align}
	m_f(k) = \langle f, s_k \rangle, \quad k \in \N. \label{eq:Measurements}
\end{align}
Analogously for a \emph{reconstruction system} $\{r_1,r_2, \ldots \} \subset \Hil$ we define the reconstruction space $\Rcal \subset \Hil$ as
\begin{align*}
	\Rcal = \overline{\spann} \{ r_k \, : \, k \in \N \}
\end{align*}
and, likewise, its finite dimensional version is defined as
\begin{align*}
	\Rcal_N = \spann \{ r_1, \ldots, r_N \}, \quad N \in \N. 
\end{align*}
We will assume the sampling system to form an orthonormal basis and the reconstruction system to be a frame, since this will exactly be our setup, see Section \ref{section:SampRecSpace}. A fully general presentation of generalized sampling can be found in \cite{AH1, AHP2}. Further it is natural to assume a certain subspace condition, indeed, it is required that
\begin{align}
	\Rcal \cap \Scal^\perp = \{ 0 \} \quad \text{and} \quad \Rcal + \Scal \text{ is closed.} \label{eq:SubspaceCondition}
\end{align}
This guarantees a well posedness of the finite dimensional reconstruction problem, cf. Theorem \ref{thm:Existence}. Indeed, the reconstruction problem is now formulated as follows: given a finite number of measurements $\langle f, s_1 \rangle, \ldots, \langle f, s_M \rangle$ of some unknown $f \in \Hil$ we wish to determine a reconstruction $f_N \in \Rcal_N$ such that $\| f - f_N \|$ is small and $f_N \to f$ as $N \to \infty$ fast. The following theorem leads to the generalized sampling reconstruction and proves its existence.

\begin{theorem}[\cite{AHP2}]\label{thm:Existence}
Let $\Scal_M$ and $\Rcal_N$ be as above and $P_{\Scal_M}$ be the following finite rank operator
\begin{align*}
	P_{\Scal_M} : \Hil &\to \Scal_M \\
	f & \mapsto \sum_{k=1}^M \langle f, s_k \rangle  s_k.
\end{align*}
If \eqref{eq:SubspaceCondition} holds, then there exists an $M \in \N$ such that the system of equations
\begin{align}
	\langle P_{\Scal_M} f_{N,M}, r_j \rangle = \langle P_{\Scal_M} f , r_j\rangle, \quad j = 1, \ldots, N \label{eq:thmEq}
\end{align}
has a unique solution $f_{N,M } \in \Rcal_N$. Moreover, the smallest $M \in \N$ such that the system is uniquely solvable is the least number $M \in \N$ so that
\begin{align}
	c_{N,M} := \inf_{\substack{f \in \Rcal_N,\\ \| f \| = 1}} \| P_{\Scal_M} f \| > 0. \label{eq:thmExistence}
\end{align}
Furthermore,
\begin{align}
	\| f - P_{\Rcal_N} f \| \leq \| f - f_{N,M} \| \leq \frac{1}{c_{N,M}} \| f - P_{\Rcal_N} f \|, \label{eq:thmError}
\end{align}
where $P_{\Rcal_N}: \Hil \to \Rcal_N$ denotes the orthogonal projection onto $\Rcal_N$.
\end{theorem}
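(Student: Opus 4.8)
The plan is to use that, under the standing assumption that the sampling system is an orthonormal basis, the operator $P_{\Scal_M}$ is precisely the orthogonal projection of $\Hil$ onto $\Scal_M$; in particular it is self-adjoint and idempotent, and $\|P_{\Scal_M}f\|^2=\sum_{k=1}^M|\langle f,s_k\rangle|^2$ is nondecreasing in $M$. The first step is to recast the linear system \eqref{eq:thmEq}. Since $r_1,\dots,r_N$ span $\Rcal_N$, \eqref{eq:thmEq} is equivalent to $\langle P_{\Scal_M}(f_{N,M}-f),g\rangle=0$ for every $g\in\Rcal_N$, i.e.\ to $G_{N,M}f_{N,M}=P_{\Rcal_N}P_{\Scal_M}f$, where $G_{N,M}:=P_{\Rcal_N}P_{\Scal_M}|_{\Rcal_N}$ maps $\Rcal_N$ into itself. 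A short computation using that $P_{\Scal_M}$ is a self-adjoint idempotent shows that $G_{N,M}$ is self-adjoint on $\Rcal_N$ with $\langle G_{N,M}g,g\rangle=\|P_{\Scal_M}g\|^2\geq 0$. Hence, $\Rcal_N$ being finite-dimensional, \eqref{eq:thmEq} is uniquely solvable if and only if $G_{N,M}$ is positive definite, which holds if and only if $P_{\Scal_M}g\neq 0$ for all $g\in\Rcal_N\setminus\{0\}$; by compactness of the unit sphere of the finite-dimensional space $\Rcal_N$ this is exactly the condition $c_{N,M}>0$ of \eqref{eq:thmExistence}. Since $\|P_{\Scal_M}g\|$ is nondecreasing in $M$, so is $c_{N,M}$, which yields that the admissible $M$ form an interval $\{M_0,M_0+1,\dots\}$ whose minimum $M_0$ is the least $M$ for which \eqref{eq:thmEq} is uniquely solvable.

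The second step is to show that some admissible $M$ exists, and this is where the subspace condition \eqref{eq:SubspaceCondition} enters. As $\{s_k\}$ is an orthonormal basis of $\Scal$, one has $\|P_{\Scal_M}f\|^2\uparrow\|P_{\Scal}f\|^2$ for every $f\in\Hil$; restricted to the compact unit sphere of the finite-dimensional space $\Rcal_N$ this is a monotone sequence of continuous functions with continuous limit, so by Dini's theorem the convergence is uniform and $c_{N,M}\uparrow c_N:=\inf\{\|P_{\Scal}f\|:f\in\Rcal_N,\ \|f\|=1\}$. If $c_N=0$ then, by compactness, there is $f\in\Rcal_N$ with $\|f\|=1$ and $P_{\Scal}f=0$, i.e.\ a nonzero element of $\Rcal_N\cap\Scal^\perp\subseteq\Rcal\cap\Scal^\perp$, contradicting \eqref{eq:SubspaceCondition}; hence $c_N>0$ and $c_{N,M}>0$ for all sufficiently large $M$. (The closedness part of \eqref{eq:SubspaceCondition} plays no role at finite $N$ since $\Rcal_N$ is finite-dimensional, but is needed for the asymptotic theory.) Combined with the previous paragraph, this establishes existence, uniqueness, and the characterization \eqref{eq:thmExistence}.

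For the error estimate \eqref{eq:thmError}, the lower bound is immediate, since $P_{\Rcal_N}f$ is the best approximation to $f$ from $\Rcal_N$ and $f_{N,M}\in\Rcal_N$. For the upper bound I would view $W\colon f\mapsto f_{N,M}$ as a linear map $\Hil\to\Rcal_N$; by uniqueness it fixes every element of $\Rcal_N$, hence $W$ is a bounded idempotent with range $\Rcal_N$. Testing the defining relation of $f_{N,M}$ against $g=u:=Wf\in\Rcal_N$ and using that $P_{\Scal_M}$ is a self-adjoint idempotent gives $\|P_{\Scal_M}u\|^2=\langle f,P_{\Scal_M}u\rangle\leq\|f\|\,\|P_{\Scal_M}u\|$, so $\|P_{\Scal_M}u\|\leq\|f\|$; with $c_{N,M}\|u\|\leq\|P_{\Scal_M}u\|$ this yields $\|W\|\leq 1/c_{N,M}$. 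Since $W$ is a nontrivial idempotent (the cases $\Rcal_N\in\{\{0\},\Hil\}$ being trivial for \eqref{eq:thmError}), the standard identity $\|I-W\|=\|W\|$ holds, and because $(I-W)g=0$ for all $g\in\Rcal_N$ we obtain, taking $g=P_{\Rcal_N}f$,
\begin{align*}
\|f-f_{N,M}\|=\|(I-W)f\| &=\|(I-W)(f-P_{\Rcal_N}f)\| \\
&\leq \|W\|\,\|f-P_{\Rcal_N}f\| \leq \frac{1}{c_{N,M}}\,\|f-P_{\Rcal_N}f\|.
\end{align*}

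I expect the main obstacle to be the existence step: passing from "the quadratic form $g\mapsto\|P_{\Scal_M}g\|^2$ is positive definite on $\Rcal_N$ for some $M$" to the verifiable hypothesis \eqref{eq:SubspaceCondition} requires the strong convergence $P_{\Scal_M}\to P_{\Scal}$, its upgrade to uniform convergence on the unit sphere of $\Rcal_N$ via finite-dimensionality (Dini), and the correct extraction of a nonzero element of $\Rcal\cap\Scal^\perp$. Everything else is linear algebra on a finite-dimensional space, together with the one slightly non-obvious ingredient $\|W\|=\|I-W\|$ for idempotents, which is exactly what is needed to obtain the sharp constant $1/c_{N,M}$ rather than the weaker $\sqrt{1+c_{N,M}^{-2}}$ that a plain orthogonal splitting of $f-f_{N,M}$ would give.
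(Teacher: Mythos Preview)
The paper does not supply its own proof of this theorem; it is quoted verbatim from \cite{AHP2} and used as a black box. There is therefore nothing in the present paper to compare your argument against.

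That said, your argument is correct and is essentially the standard proof from the cited reference. The reformulation of \eqref{eq:thmEq} as $G_{N,M}f_{N,M}=P_{\Rcal_N}P_{\Scal_M}f$ with $G_{N,M}=P_{\Rcal_N}P_{\Scal_M}|_{\Rcal_N}$ self-adjoint and positive semidefinite is the right linear-algebraic core, and your identification of unique solvability with $c_{N,M}>0$ via compactness of the unit sphere of $\Rcal_N$ is clean. The existence step using $P_{\Scal_M}\to P_\Scal$ strongly and then extracting a nonzero element of $\Rcal\cap\Scal^\perp$ by compactness is also correct; the appeal to Dini is slightly more than needed (a direct subsequence argument suffices), but it is certainly valid. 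For the error bound, your use of the Hilbert-space identity $\|I-W\|=\|W\|$ for nontrivial bounded idempotents is exactly the device that yields the sharp constant $1/c_{N,M}$, and this is precisely how it is done in \cite{AHP2}. Your reading of where the difficulty lies is accurate: the existence step is the only place where \eqref{eq:SubspaceCondition} is genuinely used, and the closedness hypothesis is indeed irrelevant for fixed finite $N$.
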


\begin{deff}[\cite{AHP2}]
The solution $f_{N,M}$ in Theorem \ref{thm:Existence} is called \emph{generalized sampling reconstruction}.
\end{deff}

\begin{deff}[\cite{UnsAld, Tang}]
The quantity $c_{N,M}$ in \eqref{eq:thmExistence} is called the \emph{infimum cosine angle} between the subspaces $\Rcal_N$ and $\Scal_M$.
\end{deff}

\subsection{Stable sampling rate}

In light of Theorem \ref{thm:Existence} it is crucial to have control over $c_{N,M}$. This motivates the definition of the stable sampling rate.

\begin{deff}\label{def:SSR}
For any fixed $N \in \N$ and $\theta >1$ the \emph{stable sampling rate} $\Theta(N, \theta)$ is defined as
\begin{align*}
	\Theta(N,\theta) = \min \left \{M \in \N \, : \, c_{N,M} > \frac{1}{\theta} \right\}.
\end{align*}
\end{deff}

The stable sampling rate determines the number of measurements $M$ that are needed in order to find an approximation $f_{N,M} \in \Rcal_N$ or equivalently, find $N$ coefficients such that the angle $c_{N,M}$ is controlled by the threshold $\theta$. We wish to emphasize that the control of $c_{N,M}$ allows the control of the error that is made by the approximation via GS by  \eqref{eq:thmError}.

%
%
%

\section{Shearlets}\label{section:Shearlets}

In this section we introduce the function systems that are used for the reconstruction space, these are \emph{compactly supported shearlets}. 

Shearlet systems  were first introduced by K. Guo, G. Kutyniok, D. Labate, W.-Q Lim and G. Weiss in \cite{GuoKutLab2006, LLKW2007} and the following notations have became standard in this topic. The  \emph{parabolic scaling matrices} with respect to scale $j \in \N \cup \{ 0 \}$ are denoted by
\begin{align*}
	A_{2^j} := \begin{pmatrix} 2^j & 0 \\ 0 & 2^{j/2 } \end{pmatrix}, \qquad \widetilde{A}_{2^j} := \begin{pmatrix} 2^{ j/2 } & 0 \\ 0 & 2^j \end{pmatrix}, 
\end{align*}
and the \emph{shearing matrices} with parameter $k \in \Z$ are 
\begin{align*}
	S_k = \begin{pmatrix} 1 & k \\ 0 & 1 \end{pmatrix}.
\end{align*}
These operations, together with the standard integer shift of functions in $L^2(\R^2)$ are then used to define the \emph{cone adapted discrete shearlet system}.
\begin{deff}[\cite{KitKutLim}]\label{Definition:ShearletSystem}
Let $\phi, \psi, \psitilde \in L^2(\R^2)$ be the \emph{generating functions} and $c=(c_1, c_2) \in (\R^+)^2$. Then the \emph{(cone adapted discrete) shearlet system} is defined as
\begin{align*}
	\mathcal{SH}(\phi,\psi, \psitilde, c) = \Phi(\phi, c_1) \cup \Psi (\psi, c) \cup \widetilde{\Psi}(\psitilde,c),
\end{align*}
where
\begin{align*}
	\Phi(\phi,c_1) = \{ \phi( \cdot - c_1m) \, : \, m \in \Z^2\},
\end{align*}
and
\begin{align*}
	\Psi (\psi, c) &= \left\{ \psi_{j,k,m} = 2^{3j/4 } \psi \left(( S_k A_{2^j}) \cdot - cm \right) \, : \, j \geq 0, |k| \leq 2^{j/2}, m  \in \Z^2 \right\},\\
	\widetilde{\Psi} (\psitilde, c) &= \left\{ \psitilde_{j,k,m} = 2^{3j/4 } \psitilde \left(( S_k^T \widetilde{A}_{2^j}) \cdot - \widetilde{c}m \right) \, : \, j \geq 0, |k| \leq 2^{j/2}, m  \in \Z^2 \right\},
\end{align*}
where the multiplication of $c$ and $\widetilde{c}$ with the translation parameter $m$ should be understood entry wise.
\end{deff}
The shearlet system has a multiscale structure, however, it does not form a classical \emph{multiresolution analysis} (MRA), see \cite{Dau, Mallat} for a definition of an MRA. Moreover, it is still an open question whether there exists compactly supported orthonormal shearlet bases. Nevertheless, shearlets can form a frame.

\begin{theorem}[\cite{KitKutLim}]\label{theorem:shearletsframe}
Let $\phi, \psi \in \L^2(\R^2)$ such that
\begin{align*}
	|\phihat(\xi_1, \xi_2)| \leq C_1 \min \{1, | \xi_1|^{-r} \} \min \{ 1, |\xi_2|^{-r} \}
\end{align*}
and
\begin{align*}
	|\psihat(\xi_1, \xi_2)| \leq C_2 \min \{1, | \xi_1|^{\alpha} \} \min \{1, | \xi_1|^{-r} \} \min \{ 1, |\xi_2|^{-r} \},
\end{align*}
for some constants $C_1,C_2 >0$ and $\alpha > r >3$. Further let $\psitilde (x_1,x_2) = \psi(x_2, x_1)$ and assume there exists a positive constant $A>0$ such that
\begin{align*}
    |\phihat(\xi)|^2 + \sum_{j \geq 0} \sum_{|k| \leq \lceil 2^{j/2}  \rceil} |\psihat(S_k^T (A_j)^{-1} \xi)|^2 + \sum_{j \geq0} \sum_{|k| \leq \lceil 2^{j/2}  \rceil} |\widehat{\psitilde}(S_k (\widetilde{A}_j)^{-1} \xi)|^2 > A
\end{align*}
holds almost everywhere. Then there exists $c = (c_1, c_2) \in (\R^+)^2$ such that the cone-adapted shearlet system $\mathcal{SH}(\phi,\psi, \psitilde, c)$ forms a frame for $L^2(\R^2)$.
\end{theorem}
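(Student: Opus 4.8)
The plan is to run the standard argument showing that a cone-adapted shearlet system is a frame, essentially as in \cite{KitKutLim}: transfer everything to the Fourier side, exploit that for a fixed scale $j$ and shear $k$ the coefficients indexed by the translation lattice are Fourier-series coefficients, and thereby reduce both frame inequalities to a pointwise estimate on the admissibility function $\Gamma$ together with control of off-diagonal correlation terms.

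First I would fix $j\geq 0$ and $|k|\leq\lceil 2^{j/2}\rceil$, set $B=S_kA_{2^j}$, and compute, using Plancherel and the substitution $\xi=B^T\eta$ (which turns the translation by $cm$ into a modulation),
\begin{align*}
\langle f,\psi_{j,k,m}\rangle = 2^{3j/4}\int_{\Rtwo}\fhat(B^T\eta)\,\overline{\psihat(\eta)}\,e^{2\pi i\langle cm,\eta\rangle}\,d\eta .
\end{align*}
Thus, for fixed $j,k$, the $\langle f,\psi_{j,k,m}\rangle$ are, up to the factor $2^{3j/4}$, the Fourier coefficients with respect to the lattice $c_1\Z\times c_2\Z$ of the function $g_{j,k}(\eta)=\fhat(B^T\eta)\overline{\psihat(\eta)}$. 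Parseval's identity for Fourier series, applied to the periodization of $g_{j,k}$ along the dual lattice, expresses $\sum_{m\in\Z^2}|\langle f,\psi_{j,k,m}\rangle|^2$ as $(c_1c_2)^{-1}\int_{\Rtwo}|\fhat(\xi)|^2|\psihat(S_{-k}^TA_{2^j}^{-1}\xi)|^2\,d\xi$ plus cross terms coming from the nonzero dual-lattice shifts. Summing over $j$ and $k$ (using the symmetry of the index set under $k\mapsto -k$) and adding the analogous identities for the $\Phi$-part (translation lattice $c_1\Z^2$) and the $\widetilde\Psi$-part then yields an identity of the shape
\begin{align*}
\sum_{g\in\mathcal{SH}(\phi,\psi,\psitilde,c)}|\langle f,g\rangle|^2 = \frac{1}{c_1c_2}\int_{\Rtwo}|\fhat(\xi)|^2\,\Gamma(\xi)\,d\xi \;+\; \frac{1}{c_1c_2}\,R(f),
\end{align*}
where $\Gamma$ is exactly the sum of squared moduli from the admissibility hypothesis and $R(f)$ is the total off-diagonal remainder --- a sum over scales, shears, cone pieces, and nonzero dual-lattice vectors $p$ of integrals of $\fhat$ against a $p$-translate of $\fhat$, weighted by products of translates of $\psihat$ and $\phihat$. (Taking $c_1=c_2$ makes the normalizing constants of the three pieces coincide, which is harmless since only the existence of an admissible $c$ is claimed.)

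For the \emph{upper} frame bound, the decay hypotheses first give $\Gamma\in L^\infty(\Rtwo)$: for fixed $\xi$ one has $S_k^TA_{2^j}^{-1}\xi=(2^{-j}\xi_1,\,2^{-j}k\xi_1+2^{-j/2}\xi_2)$, and the decay of order $r>3$ in each variable together with the vanishing of order $\alpha>r$ of $\psihat$ at $\xi_1=0$ makes $\sum_{|k|\leq 2^{j/2}}|\psihat(S_k^TA_{2^j}^{-1}\xi)|^2$ geometrically summable in $j$, while the decay of $\phihat$ controls the $\phi$-part. One then bounds $R(f)$ by moving the modulus inside, applying Cauchy--Schwarz in the dual-lattice index and the space variable, and estimating by quantities of the form $\sum_{p\neq 0}\esssup_\xi\big(|\psihat(\xi)|\,|\psihat(\xi+v_p)|\big)$ and their $\phi$-analogues, which the same decay makes finite; with $\|\fhat\|_{L^2}=\|f\|$ this produces a finite Bessel bound for any fixed $c$. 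For the \emph{lower} bound, the admissibility hypothesis is $\Gamma(\xi)\geq A$ for a.e.\ $\xi$, so the main term dominates $\tfrac{A}{c_1c_2}\|f\|^2$; moreover $|R(f)|\leq\Lambda(c)\|f\|^2$ with $\Lambda(c)\to 0$ as $(c_1,c_2)\to 0$, since shrinking $c$ lengthens every nonzero dual-lattice vector and the weights (products of translates of $\psihat$, $\phihat$) are then dominated by a summable quantity tending to $0$. Choosing $c$ small enough that $\Lambda(c)<A$ gives
\begin{align*}
\sum_{g\in\mathcal{SH}(\phi,\psi,\psitilde,c)}|\langle f,g\rangle|^2 \;\geq\; \frac{A-\Lambda(c)}{c_1c_2}\,\|f\|^2 \;>\;0 ,
\end{align*}
and together with the Bessel bound this is the frame property.

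The main obstacle is precisely this pair of estimates on the remainder $R(f)$: finiteness for a fixed $c$ and smallness as $c\to 0$. The delicate point is that at scale $j$ there are of order $2^{j/2}$ shears while the $j$-th frequency band has height of order $2^{j/2}$ in the sheared direction, so the polynomial decay has to be strong enough to beat this growth --- which is exactly why the hypotheses impose $r>3$ together with the extra vanishing order $\alpha>r$ of $\psihat$ on the $\xi_1$-axis. Carrying out these sums carefully (separating the high-frequency corona from the low-frequency core and summing the geometric-type series in $j$ against the $O(2^{j/2})$ terms in $k$, both for the diagonal part and for each nonzero dual-lattice vector) is the heart of the proof; the rest is bookkeeping over the three cone pieces and the lattices $c_1\Z^2$, $c_1\Z\times c_2\Z$, $c_2\Z\times c_1\Z$.
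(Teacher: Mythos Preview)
The paper does not prove this theorem; it is quoted verbatim from \cite{KitKutLim} and only used as a black box to guarantee that compactly supported shearlet frames exist under the stated decay and admissibility hypotheses. There is therefore no proof in the present paper to compare against.

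That said, your outline is exactly the standard argument carried out in \cite{KitKutLim}: pass to the Fourier side, recognize the translation sums as Fourier-series coefficients on the dual lattice, obtain the decomposition into the diagonal term $\int|\fhat|^2\Gamma$ and the off-diagonal remainder $R(f)$, bound $\Gamma$ above using the decay hypotheses and below using the admissibility hypothesis, and finally show $|R(f)|\leq\Lambda(c)\|f\|^2$ with $\Lambda(c)\to 0$ as $c\to 0$. Your identification of the delicate point --- that the $O(2^{j/2})$ shears per scale must be beaten by the polynomial decay, which is what forces $r>3$ and $\alpha>r$ --- is also the crux in the original. So your proposal is correct and matches the cited proof.
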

An explicit construction of compactly supported shearlets that form a frame can be found in \cite{KitKutLim}.  Note that the frequency decay assumptions on the generators given in Theorem \ref{theorem:shearletsframe} are essential and secure that the overlapping in frequency domain is controlled. In particular, the frame constants depend on this behavior, \cite{Lim,KitKutLim,KutLabBook}. For more information about shearlets we refer to \cite{ShearletBook} and the references therein.

\subsection{Sparse approximation of cartoon-like functions}\label{subsec:ApproxCartoon}

Shearlets are vastly used in applied harmonic analysis due to the \emph{optimal sparse approximation rate for cartoon-like functions}. We now recall the definition of cartoon-like functions and the results about optimal sparse approximation rate.

The set of cartoon-like functions was first introduced in \cite{DonSparse} and since then has been frequently used as a mathematical model for natural images.

\begin{deff}
Let $\nu >0$ and $f : \R^2 \longrightarrow \C$ a function of the form
\begin{align*}
	f = g + h \chi_B,
\end{align*}
where $B \subset [0,1]^2$ is a set whose boundary is a closed $C^2$ curve with curvature bounded by $\nu$ and $g,h \in C^2(\R)$ are compactly supported in $[0,1]^2$ with $\| g\|_{C^2}, \| h \|_{C^2} \leq 1$. Then we call $f$ a \emph{cartoon-like function}. The set of cartoon-like functions is called \emph{class of cartoon-like functions} and is denoted by $\mathcal{E}^2(\nu)$.
\end{deff}

The following theorem shows the sparse approximation rate for shearlets.
\begin{theorem}[\cite{KutLim}]\label{theorem:OptimalSparseApprox}
Let $\mathcal{SH}(\phi, \psi, \psitilde, c)$ be a compactly supported shearlet frame with
\begin{itemize}
\item[i)] $| \psihat(\xi)| \leq C \min \{1, | \xi_1|^{\alpha} \} \min \{1, | \xi_1|^{-r} \} \min \{ 1, |\xi_2|^{-r} \}$, 
\item[ii)] $\left| \frac{\partial}{\partial \xi_2} \psihat(\xi)\right| \leq |h(\xi_1)|\left ( 1+ \frac{\xi_2}{\xi_1} \right)^{-r}$,
\end{itemize}
where $\alpha >5, r \geq 4, h \in L^1(\R), C>0$ is some constant. Further assume $\psitilde$ fulfills i) and ii) with switched roles of $\xi_1$ and $\xi_2$. Then for any fixed $\nu>0$ and $f \in \mathcal{E}^2(\nu)$ we have
\begin{align*}
	\| f - f_N \|_2^2 \leq C' (\log N)^3 N^{-2}, \quad \text{ as } N \longrightarrow \infty,
\end{align*}
where $f_N$ is the best $N$-term approximation obtained using the $N$ largest shearlets coefficients of $f$ in magnitude allowing polynomial depth search
and $C'$ is some constant independent of $N$.
\end{theorem}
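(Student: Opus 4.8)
The plan is to prove the estimate by reducing to the behaviour of the shearlet coefficients near the $C^2$ edge $\partial B$, since the genuinely smooth ingredients of $f$ contribute only at the rate $O(N^{-2})$ (governed by their $C^2$-regularity) or faster. Concretely, write $f = g + h\chi_B$ and extend $h$ to a compactly supported $C^2$ function; then $g$ and this extension of $h$ are smooth, and by the frequency decay in hypothesis i) together with a standard integration-by-parts argument their shearlet coefficients decay rapidly in the scale $2^j$, so the best $N$-term error for those pieces, as well as the contribution of the coarse-scale system $\Phi(\phi,c_1)$ (only $O(1)$ of whose elements are relevant for compactly supported $f$), is harmless. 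Likewise the ``interior'' shearlets whose compact support does not meet $\partial B$ only see a $C^2$ function. Hence it suffices to control the coefficients $\langle h\chi_B, \psi_{j,k,m}\rangle$ and, symmetrically, $\langle h\chi_B, \psitilde_{j,k,m}\rangle$ for shearlets whose support meets $\partial B$.

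Next I would localize at each scale $j$: using a smooth partition of unity adapted to a grid of boxes of sidelength $\asymp 2^{-j/2}$, one splits the integral so that only $O(2^{j/2})$ boxes meet the edge. Within such a box the curvature bound $\nu$ allows $\partial B$ to be replaced by its tangent line, the error incurred being $O(2^{-j})$ --- exactly the parabolic tolerance --- so the model case is a straight edge of some slope $s$ through a box of parabolic aspect. For this model I would estimate $\langle h\chi_B, \psi_{j,k,m}\rangle$ on the Fourier side: $\psihat(S_k A_{2^j}\xi)$ is essentially supported on a thin parabolic wedge, while the singularity of $\widehat{h\chi_B}$ lives along a line through the origin. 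The decisive estimate is that the coefficient is large, of size $\asymp 2^{-3j/4}$ (the bound $\|f\|_\infty\,\|\psi_{j,k,m}\|_1$), only when the shear matches the slope, $|k + s\,2^{j/2}| \lesssim 1$, and otherwise decays like $(1+|k+s\,2^{j/2}|)^{-r}$. I expect this decay in the shear parameter to be the main obstacle: it is exactly what hypothesis ii), the bound on $\partial_{\xi_2}\psihat$, is designed to deliver, but extracting it carefully --- and dealing with the transition region near edge points whose tangent has slope $\pm1$, where the roles of the cones $\Psi$ and $\widetilde{\Psi}$ swap --- is the technical heart of the argument.

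Finally I would carry out the counting. At scale $j$ there are $O(2^{j/2})$ edge boxes, and in each box only $O(1)$ well-aligned shearlets give a coefficient of size $\asymp 2^{-3j/4}$; summing the residual $(1+|k+s\,2^{j/2}|)^{-r}$ decay over the remaining $|k|\le 2^{j/2}$ shears, over translations, and over the two cones contributes only logarithmic overhead. Thus, up to log factors, the number of shearlet coefficients from scales $\le j$ exceeding a threshold $\asymp 2^{-3j/4}$ is $\asymp 2^{j/2}$, i.e. the $n$-th largest coefficient satisfies $|\theta_n| \lesssim (\log n)^{3/2}\, n^{-3/2}$. Taking $f_N$ to be the reconstruction from the $N$ largest coefficients and invoking the frame property of $\mathcal{SH}(\phi,\psi,\psitilde,c)$ --- so that the $L^2$-approximation error is bounded by the $\ell^2$-tail of the coefficient sequence --- yields $\|f - f_N\|_2^2 \lesssim \sum_{n>N}|\theta_n|^2 \lesssim \sum_{n>N} (\log n)^3 n^{-3} \lesssim (\log N)^3 N^{-2}$, which is the claim.
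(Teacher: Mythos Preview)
This theorem is quoted in the paper without proof, as a result of Kutyniok and Lim \cite{KutLim}; the paper itself supplies no argument. Your outline is essentially the strategy of that original reference: reduce to shearlets whose support meets $\partial B$, localize at parabolic scale $2^{-j/2}$, linearize the $C^2$ curve within each box, use hypothesis ii) to extract decay in the misalignment $|k + s\,2^{j/2}|$, count the large coefficients to get the weak-$\ell^{2/3}$ bound on the sequence, and sum the tail.

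One point worth tightening in your last step: you say the frame property lets you bound $\|f-f_N\|_2^2$ by the $\ell^2$-tail of the shearlet coefficients, but for a redundant frame this is not immediate. Reconstruction from the $N$ largest analysis coefficients $\langle f,\psi_\lambda\rangle$ goes through the \emph{dual} frame, $f_N = \sum_{\lambda\in I_N}\langle f,\psi_\lambda\rangle\,\psi_\lambda^d$, and the error bound then carries the upper frame bound of the dual system. This is bookkeeping rather than a genuine gap --- it is how \cite{KutLim} proceeds, and the present paper invokes the same mechanism (together with a result from \cite{ParLoc} relating primal and dual coefficients) in its proof of Corollary~\ref{cor:ErrorEstimates} --- but you should state it explicitly rather than appeal to ``the frame property'' as if it were Parseval.
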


\begin{rem}
The rate obtained in Theorem \ref{theorem:OptimalSparseApprox} is also achieved in \cite{GuoLab} for band-limited shearlet systems and for curvelets in \cite{CanDon}.
\end{rem}

In \cite{DonSparse} it was shown, that under some assumptions on the representation system and the selection procedure of the largest coefficients in magnitude, the optimal approximation rate of a cartoon-like function $f$ obeys
\begin{align*}
	\| f - f_N \|^2 \leq C N^{-2}.
\end{align*}
This rate is, up to a logarithmic factor, achieved by shearlets, cf. Theorem \ref{theorem:OptimalSparseApprox}. Moreover, compared to the $N^{-2}$ factor, the logarithmic term can be neglected and, thus, the rate in Theorem \ref{theorem:OptimalSparseApprox} is referred to as almost {optimal sparse approximation rate}.

\begin{rem}
It is commonly known, that standard wavelet orthonormal bases only obtain a rate of order $N^{-1}$, see \cite{KutLabBook}. Moreover both rates for shearlets and wavelets are sharp, that is, there exists cartoon-like functions such that the rate is achieved.
\end{rem}

\subsection{Assumptions on the generator}\label{subsec:AssumptionsOnGenerators}
For the rest of this paper we assume $\phi$ and $\psi$ to be compactly supported functions in $L^2(\R^2)$ and define $\psitilde(x_1,x_2) := \psi(x_2,x_1)$. 
Moreover, we assume the shearlets to have sufficient vanishing moments and frequency decay, more precisely, we assume there exist some constants $C_1, C_2>0$ such that
\begin{align}
	| \phihat(\xi_1,\xi_2) | \leq C_1\cdot  \frac{1}{(1 + | \xi_1|)^{r}}  \frac{1}{(1 + | \xi_2|)^{r}}, \label{eq:DecayPhi}
\end{align}
and 
\begin{align}
	|\psihat(\xi_1,\xi_2)| \leq C_2 \cdot \min \{1, |\xi_1|^\alpha\} \cdot \frac{1}{(1+ |\xi_1|)^{r}}\frac{1}{(1+ | \xi_2|)^{r}}. \label{eq:DecayPsi}
\end{align} 
where the regularity parameters $\alpha, r >0$ are large enough so that the shearlet system forms a frame for $L^2(\R^2)$. These assumptions are equivalent to the frequency decay assumptions stated in Theorem \ref{theorem:shearletsframe}.

\section{Stable shearlet reconstructions from Fourier measurements}\label{section:SampRecSpace}

We will now define the reconstruction space and the sampling space for which we determine a stable sampling rate.

\subsection{Shearlet reconstruction space}\label{subsection:SampRecSpace}

Without loss of generality we can assume that the generating scaling function $\phi$ and the shearlets $\psi, \psitilde$ are compactly supported in $[0,a]^2$ where $a$ is some positive integer. Then we consider all scaling functions whose support intersect $[0,a]^2$ and denote this index set by $\Omega$, i.e.
\[
	\Omega = \left \{ m \in \Z^2 \, : \, \suppp \phi_{m} \cap [0,a]^2 \neq \emptyset \right \} = \{ (m_1,m_2) \in \Z^2 \, : \, -a \leq m_1, m_2 \leq a \}.
\]
Similarly, we consider all shearlets whose support intersect $[0,a]^2$. For this, let $J-1 \in \N \cup \{ 0 \}$ be a fixed scale. Then $\Lambda_J$ denotes the following paramater set
\[ 
	\Lambda_J = \left \{ (j,k,m) \in \Z \times \Z \times \Z^2 \, : \, 0 \leq j \leq J -1, |k|\leq 2^{j/2}, m \in \Omega_{j,k} \right\},
\]
where $\Omega_{j,k} = \{ m \in \Z^2 \, : \, \suppp \psi_{j,k,m} \cap [0,a]^2 \neq \emptyset \}$ is, due to the compact support of each $(\psi_{j,k,m})_m$, of finite cardinality and for $\psitilde$ and the second cone we analogously write
\[ 
	\widetilde{\Lambda}_J = \left \{ (\widetilde{j},\widetilde{k},\widetilde{m}) \in \Z \times \Z \times \Z^2 \, : \, 0 \leq \widetilde{j} \leq J -1, |\widetilde{k}|\leq 2^{j/2}, \widetilde{m} \in \widetilde{\Omega}_{\widetilde{j},\widetilde{k}}\right\}
\]
with $\widetilde{\Omega}_{\widetilde{j},\widetilde{k}} = \{ \widetilde{m} \in \Z^2 \, : \, \suppp \psitilde_{\widetilde{j},\widetilde{k},\widetilde{m}} \cap [0,a]^2 \neq \emptyset  \}$ being of finite cardinality, respectively. In fact, the scale determines the number of translation and will be of order $2^{3/2j}$ for a fixed scale $j$.
The reconstruction space at scale $J-1$ is then defined as
\begin{align}
	\Rcal_{N_J} :=  \spann & \bigg\{\left\{ \phi_{m} \, : \, m \in \Omega \right\}  
	\cup\left\{ \psi_{j,k,m} \, : \, (j,k,m) \in \Lambda_J \right\}  
	  \cup \left\{ \psitilde_{\widetilde{j},\widetilde{k},\widetilde{m}} \, : \, (\widetilde{j},\widetilde{k},\widetilde{m}) \in \widetilde{\Lambda}_J  \right \}\bigg\}.\label{ShearReconSpace2}
\end{align}	
Up to a fixed scale $J-1$ we, asymptotically, have $N_J = 2^{2J}$ many generating functions  in $\Rcal_{N_J}$ as $J \to \infty$. 
Note that by construction at each scale we only have finitely many elements, therefore, an ordering can be performed quite naturally namely we order the system  along scales and within the scales, the shearing is ordered from $-2^{j/2}$ to $2^{j/2}$ and the translations in $\Z^2$ are ordered in a lexicographical manner for each fixed shearing and scaling parameter. 

By $\Rcal$ we denote the reconstruction space that contains all shearlets across all scales, i.e.
\begin{align*}
\Rcal = \overline{\spann} \bigg\{&  \left\{ \phi_{m} \, : \, m \in \Omega \right\}  \cup\left\{ \psi_{j,k,m} \, : \, (j,k,m) \in \Lambda_J , J \in \N \cup \{ 0 \} \right\} \\
& \qquad  \cup \left\{ \psitilde_{\widetilde{j},\widetilde{k},\widetilde{m}} \, : \, (\widetilde{j},\widetilde{k},\widetilde{m}) \in \widetilde{\Lambda}_J , J \in \N  \cup \{ 0 \} \right \}\bigg\}.
\end{align*}

\subsection{Fourier sampling space}

To define the sampling space we first choose $T_1,T_2>0$ sufficiently large such that
\begin{align*}
	\Rcal \subset L^2([-T_1,T_2]^2).
\end{align*}
Let $\varepsilon \leq \frac{1}{T_1+T_2} <1$ control the sampling density. Then we define the sampling vectors on the uniform grid $\Z^2$ by
\begin{align}
	s_\ell^{(\varepsilon)} = \varepsilon e^{2 \pi i \varepsilon \langle \ell, \cdot \rangle} \cdot \chi_{[-T_1,T_2]^2}, \quad \ell \in \Z^2. \label{s_l}
\end{align}
Based on these sampling vectors we define the sampling space $\Scal$ by
\begin{align*}
	\Scal^{(\varepsilon)}=  \overline{\spann} \left\{ s_\ell^{(\varepsilon)} \, : \, \ell \in \Z^2 \right\}.
\end{align*}
For $M = (M_1,M_2 )\in \N \times \N$ let
\begin{align*}
	\Scal^{(\varepsilon)}_{M} = \spann \left\{ s_\ell^{(\varepsilon)} \, : \, \ell = (\ell_1,\ell_2) \in \Z^2, - M_i \leq \ell_i \leq M_i, i =1,2 \right\}
\end{align*}
be the finite dimensional sampling space. Note that $M=(M_1,M_2)$ determines the size of the measured grid and the total number of possible measurements are in this case asymptotically of order $M_1\cdot M_2$. 

The task for us is now to describe the relationship between $M$ and $N$ in terms of the stable sampling rate which means such that stable and convergent reconstructions exist. In the event that the stable sampling rate is linear, i.e. $\Theta(N, \theta) = \mathcal{O}(N)$ we would  have $M_1 \cdot M_2 = \mathcal{O}(2^{2J})$ as for the wavelet case, \cite{AHP1, AHKM}.

\subsection{Main results}\label{section:MainResult}

Our main result shows that the angle between the shearlet reconstruction space and the Fourier sampling space can be controlled with an almost linear stable sampling rate. 
\begin{theorem}\label{maintheoremshearlets}
Let $\Scal\Hil(\phi, \psi, \psitilde)$ be a compactly supported shearlet frame with generators $\phi, \psi,$ and $\psitilde$ and let $N \leq N_J = \mathcal{O}(2^{2J})$. Then for all $\theta >1$ there exists $S_{\theta}>0$ such that
\begin{align*}
	c_{N,M} = \inf_{\substack{ f \in \Rcal_{N} \\ \| f \| = 1}} \| P_{\Scal_M^{(\varepsilon)}} f \| \geq \frac{1}{\theta}, 
\end{align*}
where $M = (M_1, M_2) \in \N \times \N$ with $M_i = \lceil S_{\theta} A_N^{-1/(2r-1)}\cdot 2^{J(1+\delta)}/\varepsilon\rceil, \delta \geq \frac{2}{2r-1}$ and $r>0$ is the regularity parameter from (\ref{eq:DecayPhi}) and (\ref{eq:DecayPsi}). Therefore $\Theta(N, \theta) = \mathcal{O}(N^{1+\delta}A_N^{-1/(2r-1)})$. Further, the constant $S_{\theta}$ does not depend on $N$ but on $\theta, \alpha,$ and $r$.
\end{theorem}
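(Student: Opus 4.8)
Looking at this problem, I need to prove a stable sampling rate result for shearlet reconstruction from Fourier measurements. Let me think through the approach.

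\emph{Proof sketch.} The plan is to convert the infimum cosine angle $c_{N,M}$ into a decay estimate for the Fourier transform of elements of $\Rcal_N$ outside a large box, and then to exploit the frequency localization \eqref{eq:DecayPhi}--\eqref{eq:DecayPsi} of the generators together with the lower frame bound $A_N$ of the reconstruction space. First I would use that, because $\varepsilon\le(T_1+T_2)^{-1}$, the sampling vectors $\{s_\ell^{(\varepsilon)}\}_{\ell\in\Z^2}$ form a Parseval frame for $L^2([-T_1,T_2]^2)\supseteq\Rcal$ and satisfy $\langle f,s_\ell^{(\varepsilon)}\rangle=\varepsilon\,\fhat(\varepsilon\ell)$ for such $f$. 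Thus for $f\in\Rcal_N$ with $\|f\|=1$ one has $1=\varepsilon^2\sum_{\ell\in\Z^2}|\fhat(\varepsilon\ell)|^2$ and $\|P_{\Scal_M^{(\varepsilon)}}f\|^2=\varepsilon^2\sum_{|\ell_i|\le M_i}|\fhat(\varepsilon\ell)|^2$, so that $c_{N,M}\ge1/\theta$ is equivalent to
\[
	\sup_{\substack{f\in\Rcal_N\\ \|f\|=1}}\ \varepsilon^2\!\!\sum_{\substack{\ell\in\Z^2\\ |\ell_1|>M_1\ \text{or}\ |\ell_2|>M_2}}\!\!|\fhat(\varepsilon\ell)|^2\ \le\ 1-\theta^{-2};
\]
by monotonicity of $c_{N,M}$ in each $M_i$ it is enough to treat the extremal choice $\delta=\tfrac{2}{2r-1}$.

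In the wavelet case one would now reduce $\fhat$ to a single $\Z^2$-periodic multiplier times one dilated profile and factor the $\ell^2$-mass of the coefficients out against that profile, which yields a linear rate; no scaling equation is available for shearlets, so instead I expand $f=\sum_\eta c_\eta\eta$ in minimal-norm coefficients over the reconstruction system, use $\sum_\eta|c_\eta|^2\le A_N^{-1}$ together with Cauchy--Schwarz to get $|\fhat(\xi)|^2\le A_N^{-1}\sum_\eta|\widehat\eta(\xi)|^2$, and --- since each $\eta$ is compactly supported, so $\widehat\eta$ is dominated by the explicit decreasing majorants of \eqref{eq:DecayPhi}--\eqref{eq:DecayPsi} --- pass from the tail sum to the tail integral by a routine monotone quadrature comparison. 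This reduces the task to bounding $A_N^{-1}\sum_\eta\int_{\{|\xi_1|>M_1\varepsilon\ \text{or}\ |\xi_2|>M_2\varepsilon\}}|\widehat\eta(\xi)|^2\,d\xi$; the price of this cruder route is the factor $A_N^{-1}$ together with a summation over all $\asymp 2^{2J}$ reconstruction elements.

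The core computation is the per-element tail integral. Writing $R:=M_i\varepsilon$ and $|\widehat{\psi_{j,k,m}}(\xi)|=2^{-3j/4}|\psihat((S_kA_{2^j})^{-T}\xi)|$, the substitution $\zeta=(S_kA_{2^j})^{-T}\xi$ (Jacobian $2^{3j/2}$, cancelling the prefactor $2^{-3j/2}$) turns $\{|\xi_1|>R\}\cup\{|\xi_2|>R\}$ into $\{|\zeta_1|>R2^{-j}\}\cup\{|\zeta_2+k\zeta_1|>R2^{-j/2}\}$, because $\xi_1=2^j\zeta_1$ and $\xi_2=2^{j/2}(\zeta_2+k\zeta_1)$. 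Integrating the majorant $(1+|\zeta_1|)^{-2r}(1+|\zeta_2|)^{-2r}$ over the first piece gives $\lesssim(R2^{-j})^{1-2r}$; for the second piece one substitutes $u=\zeta_2+k\zeta_1$ and splits according to whether $|\zeta_1|$ is $\lesssim R2^{-j}$ or larger, again obtaining $\lesssim(R2^{-j})^{1-2r}$ --- uniformly in $k$ and $m$, and requiring only $R\gtrsim2^j$, hence $R\gtrsim2^J$. The second cone $\widetilde\Psi$ is symmetric, and the $\mathcal O(1)$ scaling functions contribute merely $\mathcal O(R^{1-2r})$. Summing over the $\asymp 2^{j/2}$ shears and $\asymp 2^{3j/2}$ admissible translations at scale $j$ (thus $\asymp2^{2j}$ elements) and then over $0\le j\le J-1$,
\[
	1-c_{N,M}^2\ \lesssim\ A_N^{-1}\,R^{1-2r}\sum_{j=0}^{J-1}2^{2j}\,2^{-j(1-2r)}\ \asymp\ A_N^{-1}\,R^{1-2r}\,2^{J(2r+1)}.
\]
Demanding the right-hand side be $\le1-\theta^{-2}$ and solving for $R$ gives $R\ge S_\theta A_N^{-1/(2r-1)}2^{J(2r+1)/(2r-1)}$ with $S_\theta$ depending only on $\theta$ and on the constants in \eqref{eq:DecayPhi}--\eqref{eq:DecayPsi} (hence on $\alpha,r$), and since $\tfrac{2r+1}{2r-1}=1+\tfrac{2}{2r-1}$ this is exactly $M_i=\lceil S_\theta A_N^{-1/(2r-1)}2^{J(1+\delta)}/\varepsilon\rceil$ with $\delta=\tfrac{2}{2r-1}$; enlarging $\delta$ only increases $c_{N,M}$, and substituting $2^{2J}\asymp N_J\ge N$ yields $\Theta(N,\theta)=\mathcal O(N^{1+\delta}A_N^{-1/(2r-1)})$.

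The main obstacle is the uniformity in $k$ of the per-shearlet estimate: the frequency support of $\psi_{j,k,m}$ is a skewed wedge whose long direction reaches out to $|\xi|\sim2^{3j/2}$ for the extreme shears $|k|\sim2^{j/2}$, so it is not obvious that its energy outside the axis-parallel sampling box still decays like $(R2^{-j})^{1-2r}$; this is what makes the passage to the anisotropic, sheared frequency coordinates and the case analysis on the resulting tilted region necessary. Together with the factor $\asymp 2^{2J}$ lost in replacing $|\fhat|^2$ by $A_N^{-1}\sum_\eta|\widehat\eta|^2$ --- unavoidable without a scaling equation --- this is precisely what turns the linear wavelet rate into the almost-linear rate $N^{1+\delta}A_N^{-1/(2r-1)}$.
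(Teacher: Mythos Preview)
Your proposal is correct and follows essentially the same strategy as the paper: reduce $c_{N,M}\ge 1/\theta$ to a tail estimate $\|P_{\Scal_M^{(\varepsilon)}}^\perp f\|^2\le 1-\theta^{-2}$, expand $f$ in canonical (minimal-norm) frame coefficients so that Cauchy--Schwarz pulls out the factor $A_N^{-1}$, and then bound $\sum_{\ell\notin I_M}\sum_\lambda |\widehat{\psi_\lambda}(\varepsilon\ell)|^2$ using the decay profiles \eqref{eq:DecayPhi}--\eqref{eq:DecayPsi}. The paper packages this last step as Proposition~\ref{prop:EffFreqSupp}.

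The one genuine difference is in how the per-element tail is controlled. The paper keeps the lattice sums throughout, splits into the cases $k=0$ (``wavelet-like'') and $k\neq 0$, and invokes Lemma~\ref{lemma:Grafakos} (a one-dimensional convolution bound from Grafakos) to collapse the sum over $l_1$ in the sheared product $(1+|\varepsilon 2^{-j}l_1|)^{-2r}(1+|-\varepsilon k 2^{-j}l_1+\varepsilon 2^{-j/2}l_2|)^{-2r}$. You instead pass to the integral via the monotone majorant, perform the global change of variables $\zeta=(S_kA_{2^j})^{-T}\xi$, and then handle the resulting tilted region $\{|\zeta_2+k\zeta_1|>R2^{-j/2}\}$ by the elementary case split on $|\zeta_1|$. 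Your version is a bit cleaner and avoids citing an external lemma; just be aware that the split has to be at a threshold of the form $R2^{-j/2}/(2|k|)\ge R2^{-j}/2$ (so that $|k\zeta_1|\le R2^{-j/2}/2$ on the small-$|\zeta_1|$ piece), which is what makes the bound uniform in $k$. With that caveat the computation goes through exactly as you outline and yields the same exponent $(2r+1)/(2r-1)=1+\tfrac{2}{2r-1}$.
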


\begin{rem}
The rate appearing in Theorem \ref{maintheoremshearlets} is slightly worse than the rate for wavelets \cite{AHP1, AHKM} where the stable sampling rate is indeed linear, i.e. $\Theta(N, \theta) = \mathcal{O}(N)$. More precisely, one has to oversample by a factor $N^{\delta}$ and the rate depends on the lower frame bound $A_N$ of the finite shearlet system that is used to span $\Rcal_N$.
\end{rem}

\begin{rem}
If we interpret the reciprocal of the lower frame bound as a measure for redundancy or stability, then our result states that more samples are necessary if the system is more redundant or less stable. Note that if the frame is tight, then the additional factor $A_N^{-1}$ can be neglected.
\end{rem}

If one uses the same number of shearlets and wavelets to reconstruct a function from its Fourier measurements, then by Theorem \ref{maintheoremshearlets}, one has to acquire more Fourier measurements to guarantee the existence of the GS solution if shearlets are used compared to wavelets. This immediately raises the question why the use of shearlets in the context of GS is justified, if one has to acquire more samples to guarantee stable and convergent solutions. We address this issue from a different perspective. Suppose we were to sample an image using the rate computed in Theorem \ref{maintheoremshearlets} for shearlets, but we want to reconstruct the image from the same number of measurements using shearlets and wavelets. Then, due to the stable sampling rate for shearlets and wavelets, we are allowed to use more wavelets than shearlets. More precisely, let $\sigma : \N \longrightarrow \N$ be the following oversampling function
\begin{align}
	\sigma(N) = \left \lceil N^{1+\delta} \frac{1}{A_N^{\frac{2}{2r-1}}} \right \rceil, \quad N \in \N. \label{eq:sigmafunc}
\end{align}
Then the following result holds.

\begin{cor}\label{cor:ErrorEstimates}
Let $\Scal\Hil(\phi, \psi, \psitilde)$ be a compactly supported shearlet frame with generators $\phi, \psi,$ and $\psitilde$ with sufficiently large regularity $r$ and $\delta \geq 2/(2r-1)$. Further, let $f$ be a cartoon-like function $f$. For $N \in \N$ denote by  $f^s_N$ the best $N$-term approximation of $f$ using shearlets and $f^w_{\sigma(N)}$ the best $\sigma(N)$-approximation using wavelets, cf. Subsection \ref{subsec:ApproxCartoon}. Further, let $N_J = 2^{2J}, J \in \N$ be the smallest number such that $N, \sigma(N) \lesssim N_J$. If $\Rcal^s_{N_J}$ denotes the shearlet reconstruction space space so that $f_N^s \in \Rcal_{N_J}^s$ and $\Rcal_{\sigma({N_J})}^w$ denotes the wavelet reconstruction space so that $f_{\sigma(N)}^w \in \Rcal_{\sigma({N_J})}^w$,
then
\begin{align*}
	\| f - G_{{N_J},M}^s(f) \|  \lesssim N^{-1} (\log  N)^{3/2},
\end{align*}
and
\begin{align*}
	\| f - G_{\sigma({N_J}),M'}^w(f) \| &\lesssim \sigma(N)^{-1/2},
\end{align*}
where $M$ and $M'$ are of order $\sigma(N_J)$  and $G_{{N_J},M}^s(f)$ and $G_{{N_J}M}^w(f)$ are the GS solution with respect to the spaces $\Rcal_{N_J}^s$ and $\Rcal_{\sigma({N_J})}^w$. 
\end{cor}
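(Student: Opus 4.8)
The plan is to feed the best $N$-term approximation rates recalled in Subsection~\ref{subsec:ApproxCartoon} into the deterministic generalized-sampling error estimate \eqref{eq:thmError}, using the two stable sampling rates only to certify that the relevant infimum cosine angle stays bounded below by an absolute constant.

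First I would treat the shearlet reconstruction. Fix $\theta>1$. Applying Theorem~\ref{maintheoremshearlets} with the role of $N$ played by $N_J$ (so that the ambient reconstruction space is $\Rcal_{N_J}^s$), we obtain $M=(M_1,M_2)$ with $M_i \asymp A_{N_J}^{-1/(2r-1)}\,2^{J(1+\delta)}/\varepsilon$, hence $M_1M_2 \asymp N_J^{1+\delta}A_{N_J}^{-2/(2r-1)} = \sigma(N_J)$ after absorbing the fixed factor $\varepsilon^{-2}$, such that $c_{N_J,M}\ge 1/\theta$. By Theorem~\ref{thm:Existence} the GS solution $G_{N_J,M}^s(f)$ then exists and
\[
  \| f - G_{N_J,M}^s(f) \| \;\le\; \frac{1}{c_{N_J,M}}\, \| f - P_{\Rcal_{N_J}^s} f \| \;\le\; \theta\, \| f - P_{\Rcal_{N_J}^s} f \|.
\]
Since $P_{\Rcal_{N_J}^s}$ is the orthogonal projection onto $\Rcal_{N_J}^s$ and, by the standing hypothesis on $N_J$, the best $N$-term shearlet approximant $f_N^s$ lies in $\Rcal_{N_J}^s$, we get $\| f - P_{\Rcal_{N_J}^s} f \| \le \| f - f_N^s \|$, which Theorem~\ref{theorem:OptimalSparseApprox} (upon taking square roots) bounds by $\lesssim (\log N)^{3/2} N^{-1}$. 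Combining these inequalities gives the first claim. The wavelet estimate runs along exactly the same lines, now invoking the linear stable sampling rate for dyadic MRA wavelets from \cite{AHP1,AHKM}: for a fixed $\theta'>1$ it provides $M'$ with $M'_1 M'_2 \asymp \sigma(N_J)$ and infimum cosine angle between $\Rcal_{\sigma(N_J)}^w$ and $\Scal_{M'}^{(\varepsilon)}$ exceeding $1/\theta'$; then \eqref{eq:thmError} together with $f_{\sigma(N)}^w \in \Rcal_{\sigma(N_J)}^w$ yields $\| f - G_{\sigma(N_J),M'}^w(f) \| \le \theta'\, \| f - f_{\sigma(N)}^w \|$, and the classical cartoon approximation rate for wavelets (squared $L^2$ error of order $N^{-1}$) gives $\| f - f_{\sigma(N)}^w \| \lesssim \sigma(N)^{-1/2}$.

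The main obstacle lies not in this chain of inequalities but in justifying that the chosen scale $J-1$ is large enough that $\Rcal_{N_J}^s$ (resp.\ $\Rcal_{\sigma(N_J)}^w$) genuinely contains $f_N^s$ (resp.\ $f_{\sigma(N)}^w$): one must control the finest scale activated by the polynomial-depth-search selection rule underlying Theorem~\ref{theorem:OptimalSparseApprox} and check that it does not exceed $J-1$ when $N_J=2^{2J}$ is the least power of $4$ with $N,\sigma(N)\lesssim N_J$. This is precisely what the hypothesis $f_N^s\in\Rcal_{N_J}^s$ encodes; making it unconditional would require tracking the admissible scale range in the proof of the sparse approximation theorem. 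A secondary, routine point is to keep the $\varepsilon$- and $A_{N_J}$-dependence explicit throughout, so that the assertion ``$M,M'$ of order $\sigma(N_J)$'' holds literally and simultaneously for both systems.
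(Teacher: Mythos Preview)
Your argument is correct and for the wavelet estimate it is identical to the paper's. For the shearlet estimate, however, you take a slightly different and more direct route than the paper. You invoke the stated hypothesis $f_N^s\in\Rcal_{N_J}^s$ to bound $\|f-P_{\Rcal_{N_J}^s}f\|\le\|f-f_N^s\|$ and then quote Theorem~\ref{theorem:OptimalSparseApprox} verbatim. The paper instead does \emph{not} use $f_N^s$ itself as the comparison element: it inserts the primal-shearlet expansion $\sum_{\lambda\in I_N}\langle f,\psi_\lambda^d\rangle\psi_\lambda$ (which lies in $\Rcal_{N_J}^s$ because the synthesis uses primal atoms), bounds the tail by $\sum_{\lambda\notin I_N}|\langle f,\psi_\lambda^d\rangle|^2$ via the upper frame inequality, and then transfers to primal analysis coefficients using the Gramian relation from \cite{ParLoc} before applying the coefficient decay underlying Theorem~\ref{theorem:OptimalSparseApprox}.

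The trade-off is this: your route is shorter and uses only what is literally assumed in the corollary, but it leans on the hypothesis $f_N^s\in\Rcal_{N_J}^s$, which is delicate precisely because the best $N$-term approximant in the frame setting is typically synthesized with \emph{dual} shearlets, not the primal atoms that span $\Rcal_{N_J}^s$; you rightly flag this in your second paragraph. The paper's detour through \cite{ParLoc} avoids relying on that hypothesis at face value, at the cost of importing an additional result on the equivalence of primal and dual analysis coefficients. Either way the conclusion follows.
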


\begin{rem}
Note that $N^{-1} (\log  N)^{3/2} \lesssim \sigma(N)^{-1/2}$ if
\begin{align}
	N^{-(1-\delta)/2}(\log N)^{3/2} \lesssim A_N^{1/(2r-1)}, \label{eq:asymptotics}
\end{align}
and therefore, if $ \|f - f_N^s \| \asymp \| f - G_{{N_J},M}^s(f) \|$ and $ \|f - f_N^w \| \asymp \| f - G_{\sigma({N_J}),M}^w(f) \|$, then
\begin{align*}
	\| f - G_{{N_J},M}^s(f) \| \lesssim 	\| f - G_{\sigma({N_J}),M'}^w(f) \|,
\end{align*}
where "$\asymp$" means equality up to some constant.
\end{rem}

In general, there is no control about the lower frame bound of the finite shearlet system known, thus, the reader might wonder whether \eqref{eq:asymptotics} can hold. In Figure \ref{fig:asymptotics} in Section \ref{section:Numerics} we numerically confirm the validity of such an behavior of the lower frame bound.

\section{Proofs}\label{section:Proofs}

We first sketch the idea of the proof of Theorem \ref{maintheoremshearlets}.

\subsection{Intuitive argument for the proof of Theorem \ref{maintheoremshearlets}}

In order to prove the main theorem we have to bound
\begin{align*}
	\inf_{\substack{f \in \Rcal_N \\ \| f \| =1}} \| P_{\Scal_M^{(\eps)}} f \|
\end{align*}
from below for respective $M$ and $N$. For any $f \in \Rcal_N$  we have
\begin{align*}
	\| P_{\Scal_M^\eps} f \|^2 = \sum_{ \ell \in I_M} | \langle f, s^{(\eps)}_\ell \rangle |^2,
\end{align*}
where $I_M = \{ \ell= (\ell_1, \ell_2) \in \Z^2 \, : \, -M_i \leq \ell_i \leq M_i, i = 1,2 \}$. Hence, we could equivalently bound
\begin{align}
	\| P^\perp_{\Scal_M^{(\eps)}} f \|^2 = \sum_{ \ell \in (I_M)^c} | \langle f, s^{(\eps)}_\ell \rangle |^2, \label{eq:Idea1}
\end{align}
from above. Now, the key idea is to use the effective frequency support of the shearlets, in particular, the energy of  $(\psi_{j,k,m})^\wedge$ is essentially localized in frequency bands of width $2^{2j}$ up to some constant, see Subsection \ref{subsec:EffFreqSupp} for precise statements. Moreover, since $f \in \Rcal_N$ is a linear combination of shearlets up to a fixed scale $J-1$ the function $f$ might be essentially supported in $[-C2^J, C 2^J]^2$ for some constant $C$ as well. Hence, by making the grid $I_M$ large and dense enough the term in \eqref{eq:Idea1} should be small. However, there are two main concerns: First, the effective support could grow faster than linearly with respect to scaling factor $2^j$. Second, taking linear combinations of shearlets might destroy the control of the frequency behavior and hence the essential supports in Fourier domain. Unfortunately, both obstacles are inevitable causing the additional factor $N^{\delta}$ and the dependence on the lower frame bound $A_N$ in the stable sampling rate, respectively. More precisely, if $f \in \Rcal_N$ then there exists $c_1, \ldots, c_N \in \C$ such that
\begin{align*}
	f = \sum_{\lambda \leq N} c_\lambda \psi_\lambda
\end{align*}
but in general we have no control over the coefficients $(c_\lambda)_{\lambda \leq N}$. Therefore in order to resolve the latter concern, we expand $f$ as follows:
$$
	f = \sum_{\lambda \leq N } \langle f,   S_N^{-1}\psi_\lambda \rangle \psi_\lambda.
$$ 
Then
$$
	\sum_{\lambda \leq N } |\langle f,   S_N^{-1} \psi_\lambda \rangle |^2 \leq \frac{1}{A_N} \| f \|^2.
$$ 
This is how the lower frame bound of the finite shearlet system comes into the proof and thus effects the stable sampling rate. Note that this problem does not appear if the reconstruction system is an orthonormal basis or a Riesz basis, thus, an additional penalty of the stable sampling rate is not necessary for wavelets.


\subsection{Effective frequency support}\label{subsec:EffFreqSupp}

The decay assumptions (\ref{eq:DecayPhi}) and  (\ref{eq:DecayPsi}) give rise to the essential support of each shearlet atom $\psi_{\lambda}$ in frequency, see Figure \ref{fig:EssSupp}. The next result secures that the effective frequency support is controllable.

\begin{prop}\label{prop:EffFreqSupp}
Let $J \in \N$ and $(\psi_\lambda)_{\lambda \leq N_J}$ be all shearlets up to scale $J-1$ and $\omega >0$. Then there exists a constant $S := S(\omega, r, \eps)$ such that for $I_M := \{ (\ell_1, \ell_2) \in \Z^2 \, : \, -M_i \leq  \ell_i \leq M_i, i =1,2 \}$ with  $M_i = S2^{J(1+\delta)}, i=1,2$ and $\delta \geq \frac{2}{2r-1}$ we have
\begin{align*}
	\sum_{\ell \in (I_M)^c} \sum_{\lambda \leq N_J} |(\psi_\lambda)^\wedge(\eps \ell)|^2 \leq \omega.
\end{align*}
\end{prop}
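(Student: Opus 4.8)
The plan is to dominate the double sum by splitting it scale by scale and, within each scale, shearing by shearing, and then to bound the contribution of one shearlet atom $\psi_{j,k,m}$ on the frequency grid $\eps\ell$ by exploiting the decay estimate \eqref{eq:DecayPsi}. The first step is to recall that $\psi_{j,k,m}=2^{3j/4}\psi((S_kA_{2^j})\cdot-cm)$, so that its Fourier transform is $(\psi_{j,k,m})^\wedge(\xi)=2^{-3j/4}e^{-2\pi i\langle (S_kA_{2^j})^{-T}\xi,cm\rangle}\,\psihat((S_kA_{2^j})^{-T}\xi)$; in particular $|(\psi_{j,k,m})^\wedge(\xi)|=2^{-3j/4}|\psihat((S_kA_{2^j})^{-T}\xi)|$ is independent of the translation $m$. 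Since for fixed $(j,k)$ there are only $\mathcal{O}(2^{3j/2})$ admissible translations $m\in\Omega_{j,k}$, the sum over $m$ merely contributes a factor $\mathcal{O}(2^{3j/2})$, which cancels the $2^{-3j/2}$ coming from $|2^{-3j/4}\cdot|^2$. Thus, up to constants, it suffices to bound
\begin{align*}
	\sum_{\ell\in(I_M)^c}\ \sum_{j=0}^{J-1}\ \sum_{|k|\le 2^{j/2}}\ \bigl|\psihat\bigl((S_kA_{2^j})^{-T}(\eps\ell)\bigr)\bigr|^2 \le \omega .
\end{align*}

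Next I would change variables in the Fourier argument. Writing $A_{2^j}^{-1}S_k^{-1}$ explicitly, the map $\ell\mapsto (S_kA_{2^j})^{-T}(\eps\ell)=\eps\,(2^{-j}(\ell_1),\,2^{-j/2}(\ell_2-k\ell_1))$ shows that the first coordinate of the transformed argument is $\eps 2^{-j}\ell_1$ and the second is $\eps 2^{-j/2}(\ell_2-k\ell_1)$. Plugging this into \eqref{eq:DecayPsi} and using the crude bound $\min\{1,|\xi_1|^\alpha\}\le 1$ gives
\begin{align*}
	\bigl|\psihat\bigl((S_kA_{2^j})^{-T}(\eps\ell)\bigr)\bigr|^2 \ \lesssim\ \frac{1}{(1+\eps 2^{-j}|\ell_1|)^{2r}}\ \frac{1}{(1+\eps 2^{-j/2}|\ell_2-k\ell_1|)^{2r}} .
\end{align*}
Now the sum over $k$ with $|k|\le 2^{j/2}$ of the second factor can be compared to an integral: for each fixed $\ell_1$ the points $k\ell_1$ are spaced $|\ell_1|$ apart, and summing $(1+\eps 2^{-j/2}|\ell_2-k\ell_1|)^{-2r}$ over $|k|\le 2^{j/2}$ is $\lesssim 1+2^{j/2}\eps^{-1}|\ell_1|^{-1}$ when $2r>1$; combined with the first factor this reduces the problem, after summing the geometric-type series, to estimating $\sum_{\ell\in(I_M)^c}(1+\eps 2^{-j}|\ell_1|)^{-2r}\cdot(\text{mild factor in }\ell)$, which for $M_i=S2^{J(1+\delta)}$ and $\ell\notin I_M$ forces $\eps 2^{-j}|\ell_i|\gtrsim S 2^{(J-j)+J\delta}\ge S 2^{J\delta}$, yielding a bound of the form $\sum_{j<J}\sum_{|k|\le2^{j/2}} (\text{const})\,(S2^{J\delta})^{-(2r-1)}\cdot 2^{j}$. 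Summing the geometric series in $j$ gives $\lesssim 2^{J}(S2^{J\delta})^{-(2r-1)} = S^{-(2r-1)}2^{J(1-\delta(2r-1))}$, and the exponent is $\le 0$ precisely when $\delta\ge \frac{1}{2r-1}$; since we assume $\delta\ge\frac{2}{2r-1}$ this is comfortably satisfied and leaves a leftover decaying power of $2^J$, so choosing $S=S(\omega,r,\eps)$ large enough makes the whole expression $\le\omega$ uniformly in $J$.

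The main obstacle I anticipate is the careful bookkeeping of the shear sum: one must check that summing over $|k|\le 2^{j/2}$ against the tilted, anisotropically dilated lattice $\eps\ell$ does not accumulate extra powers of $2^j$ beyond what the translation count $2^{3j/2}$ and the normalization $2^{-3j/2}$ already absorb, so that the final geometric series in $j$ genuinely converges with room to spare. This requires treating separately the regime $|\ell_1|$ small (where many shears $k$ contribute comparably, but $|\ell_1|$ small together with $\ell\notin I_M$ forces $|\ell_2|$ large) and $|\ell_1|$ large (where the shear sum is tame but the first-coordinate decay is strong), and in each regime verifying that the uniform lower bound $\eps 2^{-j}\max(|\ell_1|,|\ell_2|)\gtrsim S2^{J\delta}$ on $(I_M)^c$ is enough. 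A secondary technical point is making the $\eps$-dependence of $S$ explicit: since $\eps\le 1/(T_1+T_2)$ the factor $\eps^{-(2r-1)}$ can be absorbed into $S$, so the statement $S=S(\omega,r,\eps)$ holds as claimed. The role of the $\min\{1,|\xi_1|^\alpha\}$ vanishing-moment factor in \eqref{eq:DecayPsi} is not needed here — discarding it only weakens the bound — though it is what will later be exploited in the companion estimates for $\phi$ and for $\psitilde$ via symmetry $(\xi_1\leftrightarrow\xi_2)$.
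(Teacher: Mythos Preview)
Your overall strategy---reduce to one atom per $(j,k)$ by absorbing the translation count into the normalization, then use the decay bound \eqref{eq:DecayPsi} scale by scale---is exactly the right one and matches the paper's approach. But the power-counting at the end is off by a factor of $2^{J}$, and this is not cosmetic: it leads you to the wrong conclusion about the threshold on $\delta$.

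Concretely, after cancelling $|\Omega_{j,k}|\asymp 2^{3j/2}$ against $2^{-3j/2}$, the remaining object for fixed $j$ and $k\neq 0$ is
\[
\sum_{\ell\in(I_M)^c}\frac{1}{(1+\eps 2^{-j}|\ell_1|)^{2r}}\,\frac{1}{(1+|-\eps k 2^{-j}\ell_1+\eps 2^{-j/2}\ell_2|)^{2r}}.
\]
(Your second coordinate $2^{-j/2}(\ell_2-k\ell_1)$ is a slip; the correct form is $-k2^{-j}\ell_1+2^{-j/2}\ell_2$, as a direct computation of $(S_kA_{2^j})^{-T}$ shows.) Now the honest accounting on, say, the region $|\ell_1|>M_1$ is: the $\ell_2$-sum of the second factor contributes $\lesssim 2^{j/2}/\eps$; the tail $\ell_1$-sum of the first factor contributes $\lesssim (2^{j}/\eps)(1+S2^{J(1+\delta)-j})^{-(2r-1)}$; the shear sum contributes another $2^{j/2}$. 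Multiplying and then summing the geometric series in $j$ gives
\[
\lesssim \frac{2^{2J}}{\eps^{2}}\,\bigl(S\,2^{J\delta}\bigr)^{-(2r-1)},
\]
not $2^{J}(S2^{J\delta})^{-(2r-1)}$ as you wrote. The exponent $2-\delta(2r-1)$ is nonpositive exactly when $\delta\ge \tfrac{2}{2r-1}$, so the hypothesis of the proposition is \emph{sharp for this argument}, not ``comfortably satisfied with a leftover decaying power.'' Your sketch appears to have dropped either the $\ell_2$-sum (worth $2^{j/2}$) or the shear count (also $2^{j/2}$); the line ``$\sum_{j<J}\sum_{|k|\le 2^{j/2}}(\text{const})(S2^{J\delta})^{-(2r-1)}\cdot 2^{j}$'' still carries an unsummed $k$-sum, which signals the missing factor.

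For comparison, the paper does not sum over $k$ first. It splits $k=0$ (wavelet-like, handled by direct separation in $\ell_1,\ell_2$) from $k\neq 0$, and for the latter it integrates out $\ell_1$ using the one-dimensional convolution estimate
\[
\int_0^\infty \frac{dx}{(1+a|x|)^r(1+b|x-y|)^r}\ \lesssim\ \frac{1}{\max(a,b)}\,\frac{1}{(1+\min(a,b)|y|)^r},
\]
which cleanly trades the coupled factor for a pure $\ell_2$-decay and keeps the bookkeeping transparent. Your idea of summing over $k$ first can be made to work, but you must treat the three regions of $(I_M)^c$ separately (as you yourself flag): when $|\ell_1|\le M_1$ and $|\ell_2|>M_2$, summing the $k$-factor first destroys the only available decay in $\ell_2$, so that order of summation must be reversed there. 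Once you do the case split and track all four sums ($\ell_1,\ell_2,k,j$), you will recover the $2^{2J}$ and the stated threshold $\delta\ge \tfrac{2}{2r-1}$.
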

\begin{figure}[H]
\centering
	\includegraphics[scale=0.45]{./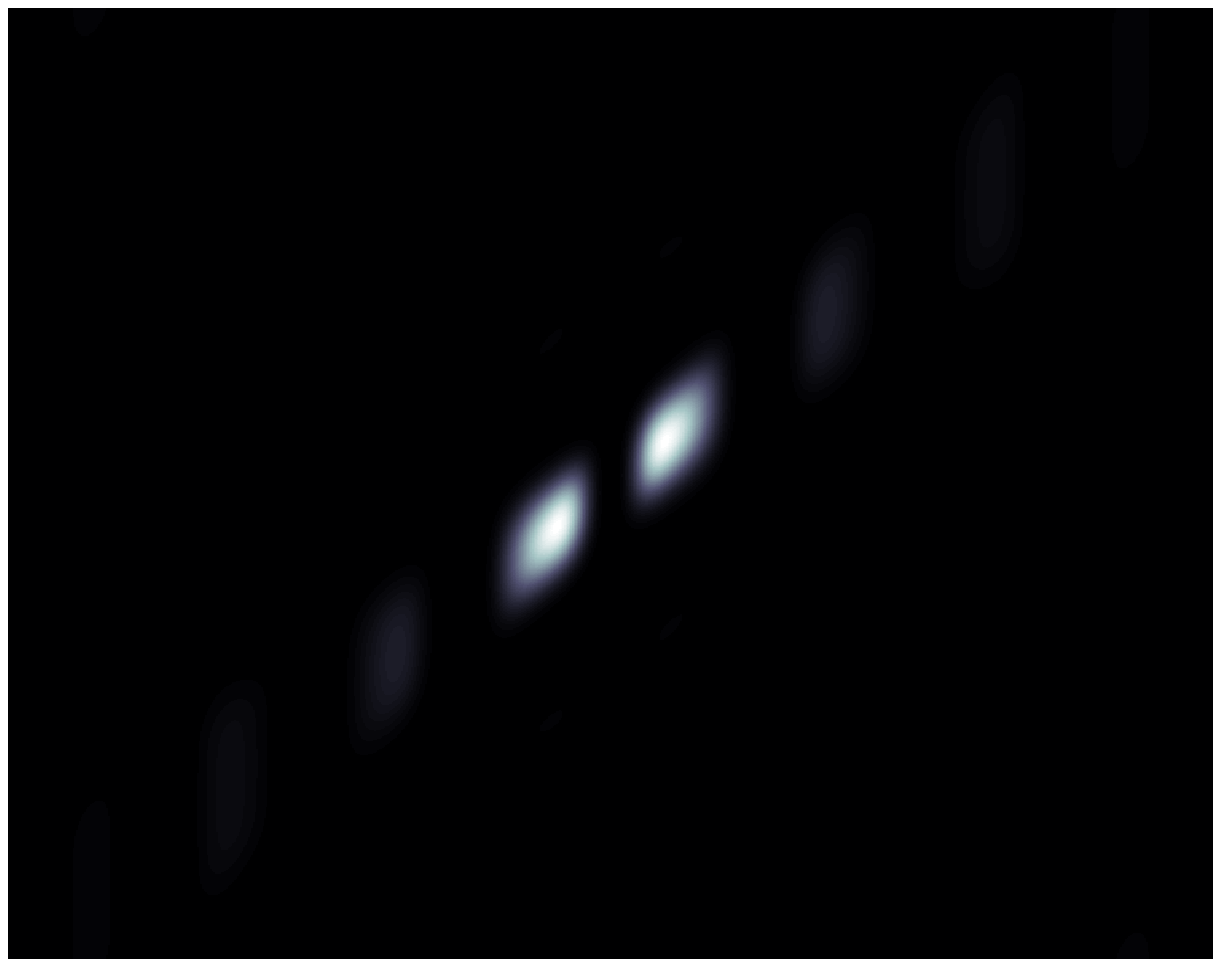}
	\includegraphics[scale=0.45]{./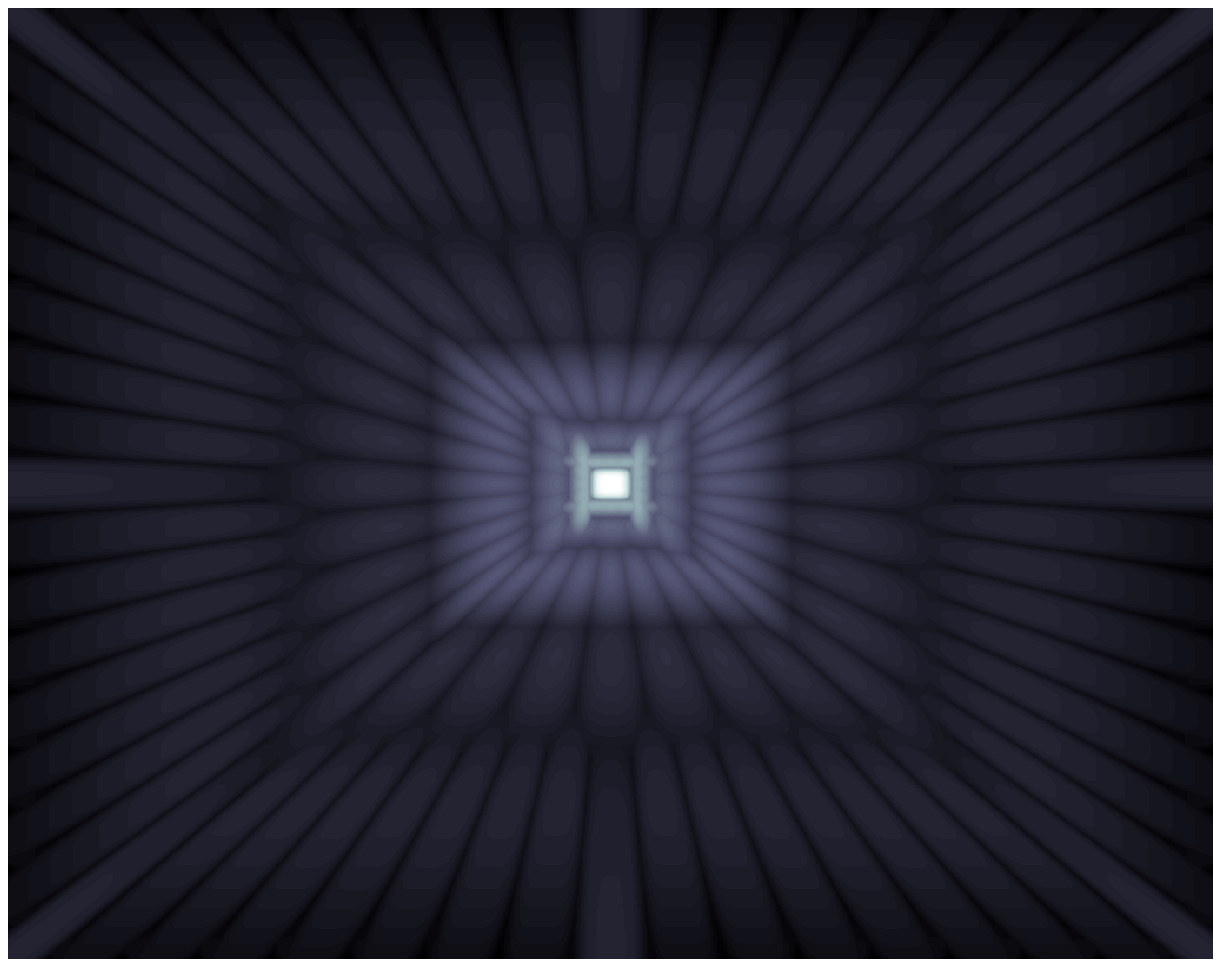}
\caption{Effective frequency support of shearlets and tiling of the frequency plane, light regions correspond to large values, therefore regions with large energy and dark regions to small values, i.e. regions with small energy. Images are computed using the ShearLab package downloaded from \texttt{http://www.shearlab.org/}} \label{fig:EssSupp}
\end{figure}

To prove Proposition \ref{prop:EffFreqSupp} we will use the following lemma which can be found in  \cite{Grafakos} (Appendix K).

\begin{lemma}[\cite{Grafakos}]\label{lemma:Grafakos}
For $y \in \R , r>1,$ and $a,b >0$ we have
\begin{align*}
	\int_0^\infty \frac{1}{(1 + a|x|)^r} \frac{1}{(1 + b|x-y|)^r} \, dx \lesssim \frac{1}{\max(a,b)}  \frac{1}{(1 +  \min ( a,b)|y|)^r}.
\end{align*}
\end{lemma}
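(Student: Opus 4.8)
The plan is to prove the stated inequality (Lemma \ref{lemma:Grafakos}) by two reductions followed by a two-region decomposition of the line. First I would drop the restriction to the half-line: since the integrand is nonnegative, $\int_0^\infty \le \int_\R$, so it suffices to bound the integral over all of $\R$. Next I would use the symmetry of the full-line integral under the substitution $x\mapsto y-x$, which interchanges $|x|$ and $|x-y|$, hence the roles of $a$ and $b$, while leaving $|y|$, $\max(a,b)$ and $\min(a,b)$ unchanged; this lets me assume without loss of generality that $a\ge b$, so that $\max(a,b)=a$ and $\min(a,b)=b$. The one tool I would invoke repeatedly is the elementary fact $\int_\R (1+c|x|)^{-r}\,dx = \tfrac{2}{(r-1)c}\asymp \tfrac1c$, valid for any $c>0$ because $r>1$.

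With $a\ge b$ fixed, I would split $\R = \{|x-y|\ge |y|/2\}\cup\{|x-y|<|y|/2\}$, separating the part away from the center $y$ of the broad (scale-$b$) bump from the part near it. On the away part the broad factor is harmless: from $|x-y|\ge|y|/2$ one gets $1+b|x-y|\ge \tfrac12(1+b|y|)$, whence $(1+b|x-y|)^{-r}\lesssim (1+b|y|)^{-r}$, and integrating the remaining narrow factor over all of $\R$ gives $\int_{\{|x-y|\ge|y|/2\}}\lesssim \tfrac1a (1+b|y|)^{-r}$, which is exactly the claimed bound.

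The main obstacle is the region near $y$, where the narrow (scale-$a$) bump sits at its tail while the broad bump is large; here one must produce the factor $\tfrac{1}{\max(a,b)}=\tfrac1a$ rather than the weaker $\tfrac1b$ that a naive estimate yields. On $\{|x-y|<|y|/2\}$ one has $|x|\ge|y|/2$, so the narrow factor is uniformly small, $(1+a|x|)^{-r}\lesssim (1+a|y|)^{-r}$, and it remains to control $\int_{|x-y|<|y|/2}(1+b|x-y|)^{-r}\,dx$. I would keep two competing bounds for this quantity: integrating the broad bump over all of $\R$ gives $\lesssim \tfrac1b$, while bounding the integrand by $1$ over an interval of length $|y|$ gives $\lesssim |y|$; thus the integral is $\lesssim \min(\tfrac1b,|y|)$. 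Splitting on the size of $b|y|$ then closes the argument: when $b|y|\le 1$ I use the length bound $|y|$ together with $(1+b|y|)^{-r}\asymp 1$ and the trivial inequality $a|y|\le (1+a|y|)^r$; when $b|y|>1$ I use the $\tfrac1b$ bound together with $1\lesssim (a/b)^{r-1}$, which holds since $a\ge b$ and $r>1$. In both subcases the near-$y$ contribution is $\lesssim \tfrac1a(1+b|y|)^{-r}$, and summing the two regions yields the lemma with a constant depending only on $r$.
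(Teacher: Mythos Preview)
Your argument is correct. The reduction to the full line, the $x\mapsto y-x$ symmetry to assume $a\ge b$, the near/far splitting around $y$, and the two subcases for the near region all go through as written; the constants depend only on $r$.

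The paper itself does not prove this lemma: it simply quotes the estimate from Grafakos (Appendix K) and uses it as a black box inside the proof of Proposition~\ref{prop:EffFreqSupp}. So there is no ``paper's proof'' to compare against; you have supplied a self-contained proof where the paper gives only a citation. Your decomposition is in fact the standard one for such convolution-type estimates of two bumps at different scales, and it is essentially what one finds in Grafakos.
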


\begin{proof}[Proof of Proposition \ref{prop:EffFreqSupp}]
Let $\omega >0$. Then we have to show the existence of an $M = (M_1,M_2) \in \N \times \N$, so that
\begin{align}
\left( \sum \limits_{ |l_2| > M_2}\sum \limits_{ |l_1| > M_1}  + \sum \limits_{ |l_2| < M_2}\sum \limits_{ |l_1| > M_1}  + \sum \limits_{ |l_2| > M_2}\sum \limits_{ |l_1| < M_1} \right) \sum_{\lambda \leq N_J} |(\psi_\lambda)^\wedge(\eps \ell)|^2
	\leq \omega, \label{eq:ProofLemmaFreq1}
\end{align}
where $M_1,M_2$ scales like $S 2^{J(1+\delta)}$ with a constant $S$ independent of $J$ sufficiently large. Note that
\begin{align}
	\sum_{\lambda \leq N_J}& |(\psi_\lambda)^\wedge(\eps \ell)|^2  \nonumber \\
	& =  \sum \limits_{m' \in \Omega} \left | \varepsilon e^{-2\pi i \varepsilon \langle l, m' \rangle} \phihat(\varepsilon l) \right|^2 + \sum \limits_{(j,k,m)\in \Lambda_{J} } \left|  \frac{\varepsilon}{2^{3j/4}} e^{-2 \pi i  \varepsilon \langle  \left(S_{k} A_{2^j}\right)^{-T} l, m \rangle} \psihat \left(\varepsilon  \left(S_{k} A_{2^j}\right)^{-T} l \right) \right|^2+ \nonumber \\
	&\makebox[1cm][c]{}+  \sum \limits_{(\widetilde{j},\widetilde{k}, \widetilde{m}) \in \widetilde{\Lambda}_{J}}  \left |  \frac{\varepsilon}{2^{3\widetilde{j}/4}} e^{-2 \pi i  \varepsilon \langle \left(  S_{\widetilde{k}} \widetilde{A}_{2^{\widetilde{j}}}\right)^{-T}  l, \widetilde{m} \rangle} \widehat{\psitilde}\left(\varepsilon \left(  S_{\widetilde{k}} \widetilde{A}_{2^{\widetilde{j}}}\right)^{-T} l \right) \right|^2, \nonumber \\
 \nonumber\\
	&\leq C  \left| \varepsilon \phihat(\varepsilon l)\right|^2 +  \sum \limits_{ j=0}^{J-1} \sum \limits_{ k = - 2^{j/2}}^{ 2^{j/2}} \left| \psihat \left(\varepsilon  \left(S_{k} A_{2^j}\right)^{-T} l \right)\right|^2+   \sum \limits_{\widetilde{j} =0}^{J-1} \sum \limits_{ \widetilde{k} = - 2^{\widetilde{j}/2}}^{2^{\widetilde{j}/2}}  \left|   \widehat{\psitilde}\left(\varepsilon \left(  S_{\widetilde{k}} \widetilde{A}_{2^{\widetilde{j}}}\right)^{-T} l \right)\right|^2 ,
\end{align}
since the cardinality of $\Omega_{j,k}$ and $\widetilde{\Omega}_{\widetilde{j},\widetilde{k}}$ is of order $2^{3/2j}$.

Denote by $\mathrm{I}, \mathrm{II}, \mathrm{III}$ the following terms
\begin{align*}	
	\text{\textrm{I}} & :=  \left( \sum \limits_{ |l_2| > M_2}\sum \limits_{ |l_1| > M_1}  + \sum \limits_{ |l_2| < M_2}\sum \limits_{ |l_1| > M_1}  + \sum \limits_{ |l_2| > M_2}\sum \limits_{ |l_1| < M_1} \right) \left|\varepsilon \phihat(\varepsilon l)\right|^2,\\
	\text{\textrm{II}} & := \left( \sum \limits_{ |l_2| > M_2}\sum \limits_{ |l_1| > M_1}  + \sum \limits_{ |l_2| < M_2}\sum \limits_{ |l_1| > M_1}  + \sum \limits_{ |l_2| > M_2}\sum \limits_{ |l_1| < M_1} \right) \sum \limits_{ j=0}^{J-1} \sum \limits_{ k = - 2^{j/2}}^{ 2^{j/2}}\left| \psihat \left(\varepsilon  \left(S_{k} A_{2^j}\right)^{-T} l \right)\right|^2, \\
	\text{\textrm{III}} & :=  \left( \sum \limits_{ |l_2| > M_2}\sum \limits_{ |l_1| > M_1}  + \sum \limits_{ |l_2| < M_2}\sum \limits_{ |l_1| > M_1}  + \sum \limits_{ |l_2| > M_2}\sum \limits_{ |l_1| < M_1} \right) \sum \limits_{\widetilde{j} =0}^{J-1} \sum \limits_{ \widetilde{k} = - 2^{\widetilde{j}/2}}^{2^{\widetilde{j}/2}} \left| \widehat{\psitilde}\left(\varepsilon \left(  S_{\widetilde{k}} \widetilde{A}_{2^{\widetilde{j}}}\right)^{-T} l \right) \right|^2 . 
\end{align*}
We will estimate each sum by using the decay conditions (\ref{eq:DecayPhi}) and  (\ref{eq:DecayPsi}), respectively, in order to obtain \eqref{eq:ProofLemmaFreq1} for sufficiently large $S$ independent on $J$ with $M_i = \frac{2^{J(1+\delta)}}{\varepsilon} S , i =1,2$. 
Since
\begin{align}
	\left|\varepsilon \phihat(\varepsilon l)\right|^2 & \leq   C_1^2 \varepsilon^2 \left| \frac{1}{(1 + |\varepsilon l_1|)^r}  \frac{1}{(1 + |\varepsilon l_2|)^{r}}  \right|^2.  \label{eq:ProofLemmaFreq2}
\end{align}
we immediately have
\begin{align}
	\sum \limits_{|l_2|>M_2}  \sum \limits_{|l_1|<M_1}   \left|\phihat(\varepsilon l)\right|^2   
	& \leq \sum \limits_{|l_2|>M_2} \sum \limits_{|l_1|<M_1} C_1^2  \frac{1}{(1 + |\varepsilon l_1|)^{2r}}  \frac{1}{(1 + |\varepsilon l_2|)^{2r}} \nonumber \\
	& \leq C_1^2  \sum \limits_{|l_1|>M_1} \frac{1}{(1 + |\varepsilon l_1|)^{2r}}   \nonumber \\
	& \leq C_1^2  \frac{1}{(1+S^{J(1+ \delta)})^{2r-1}},   \label{eq:ProofLemmaFreq3}
\end{align}
where the constant $C_1$ changed in each step. Analogously, one obtains
\begin{align}
	\sum \limits_{|l_2|<M_2}  \sum \limits_{|l_1|>M_1} \left|\phihat(\varepsilon l)\right|^2   &  \leq  C_1^2  \frac{1}{(1+S 2^{J(1+ \delta)})^{2r-1}}, \nonumber \\
	\sum \limits_{|l_2|>M_2}  \sum \limits_{|l_1|>M_1}  \left|\phihat(\varepsilon l)\right|^2  &   \leq   C_1^2 \frac{1}{(1+S 2^{J(1+ \delta)})^{2r-1}}. \label{eq:ProofLemmaFreq4}
\end{align}
Hence, combinging \eqref{eq:ProofLemmaFreq3} and \eqref{eq:ProofLemmaFreq4} gives
\begin{align}
	\mathrm{I} \leq   C_1^2 \frac{1}{(1+S 2^{J'(1+ \delta)})^{2r-1}} \label{eq:ProofLemmaFreq5}
\end{align}
Regarding \textrm{II} we first have by (\ref{eq:DecayPsi})
\begin{align}
 \sum \limits_{ j=0}^{J-1} \sum \limits_{ k = - 2^{j/2}}^{ 2^{j/2}} \left|\psihat \left(\varepsilon  \left(S_{k} A_{2^j}\right)^{-T}l\right)\right|^2 
	 \leq  \sum \limits_{ j=0}^{J-1} \sum \limits_{ k = - 2^{j/2}}^{ 2^{j/2}}  \frac{C_2^2}{(1+|\varepsilon 2^{-j}l_1|)^{2r}(1+ |-\varepsilon k2^{-j}l_1 + \varepsilon 2^{-j/2} l_2|)^{2r}} , \label{eq:ProofLemmaFreq6} 
\end{align}
and, likewise for \textrm{III},
\begin{align}
\sum \limits_{\widetilde{j} =0}^{J-1} \sum \limits_{ \widetilde{k} = - 2^{\widetilde{j}/2}}^{2^{\widetilde{j}/2}} \left|\widehat{\psitilde} \left(\varepsilon  \left(S_{\widetilde{k}}^T A_{2^{\widetilde{j}}}\right)^{-T}l\right)\right|^2 
\leq \sum \limits_{\widetilde{j} =0}^{J-1} \sum \limits_{ \widetilde{k} = - 2^{\widetilde{j}/2}}^{2^{\widetilde{j}/2}}   \frac{C_3^2}{(1+ |\varepsilon 2^{-\widetilde{j}}l_2|)^{2r}(1+ | \varepsilon2^{-\widetilde{j}/2} l_1 - \varepsilon\widetilde{k} 2^{-\widetilde{j}} l_2|)^{2r}}  . \label{eq:ProofLemmaFreq7}
\end{align}
We only continue to estimate  (\ref{eq:ProofLemmaFreq6}) since an estimate for (\ref{eq:ProofLemmaFreq7}) is then obtained analogously.

We consider two cases. The first case concerns shearlets, that are \ti{wavelet-like}. More precisely, we consider shearlets with no shearing first, so parabolically scaled  wavelets. 

\textbf{Case \textrm{I}}: Let  $0 \leq j\leq J-1, k = 0$. By direct computations similar to \eqref{eq:ProofLemmaFreq3} we have
\begin{align}
	\sum \limits_{|l_2| > M_2}&\sum \limits_{|l_1|<M_1}  \sum \limits_{ j=0}^{J-1}C_2^2  \frac{1}{(1+|\varepsilon 2^{-j}l_1|)^{2r}} \frac{1}{(1+ | \varepsilon 2^{-j/2} l_2|)^{2r}}  \nonumber\\
	& \leq  \sum \limits_{ j=0}^{J-1} C_2^2 \sum \limits_{|l_2| > M_2}\frac{1}{(1+ | \varepsilon 2^{-j/2} l_2|)^{2r}} \sum \limits_{|l_1|<M_1} \frac{1}{(1+|\varepsilon 2^{-j} l_1|)^{2r}} \nonumber \\
	& \leq \sum \limits_{ j=0}^{J-1}  C_2^2 \frac{2^{j}}{\eps}   \sum \limits_{|l_2| > M_2}\frac{1}{(1+ | \varepsilon 2^{-j/2} l_2|)^{2r}}  \nonumber \\
	&\leq \sum \limits_{ j=0}^{J-1}  C_2^2 \frac{2^{3/2j}}{\eps^2} \frac{1}{(1+  S 2^{J'(1+\delta)-j/2} )^{2r-1}} \nonumber \\
	&\leq   C_2^2 \frac{2^{3/2J}}{\eps^2} \frac{1}{(1+  S 2^{J(1/2+\delta)} )^{2r-1}}. \label{eq:ProofLemmaFreq8}
\end{align}
where $C_2$ changed over time. In the same manner we obtain
\begin{align}
	\sum \limits_{|l_2| < M_2}\sum \limits_{|l_1|>M_1} &\sum \limits_{ j=0}^{J-1}C_2^2    \frac{1}{(1+|\varepsilon 2^{-j}l_1|)^{2r}} \frac{1}{(1+ | \varepsilon 2^{-j/2} l_2|)^{2r}}  \leq  C_2^2 \frac{2^{3/2J}}{\eps^2}\frac{1}{(1+  S 2^{J\delta} )^{2r-1}},  \label{eq:ProofLemmaFreq9}
\end{align}
and
\begin{align} 
	\sum \limits_{|l_1|>M_1} \sum \limits_{|l_2|>M_2} & \sum \limits_{ j=0}^{J-1}  C_2^2  \frac{1}{(1+|\varepsilon 2^{-j}l_1|)^{2r}} \frac{1}{(1+ | \varepsilon 2^{-j/2} l_2|)^{2r}} \nonumber \\
	&\leq   C_2^2 \frac{2^{3/2J}}{\eps^2} \frac{1}{\left((1+  S 2^{J\delta} )(1+  S 2^{J(1/2+\delta)} )\right)^{2r-1}}. \label{eq:ProofLemmaFreq10}
\end{align}
\textbf{Case 2}: Let  $0 \leq j \leq J-1, k \neq 0$. By using Lemma \ref{lemma:Grafakos}  we have
\begin{align}
\sum \limits_{|l_2| > M_2}&\sum \limits_{|l_1|<M_1}\sum \limits_{ j=0}^{J-1} \sum \limits_{\substack{ k = - 2^{j/2} \\k \neq 0}}^{ 2^{j/2}}  C_2^2  \frac{1}{(1+|\varepsilon 2^{-j}l_1|)^{2r}} \frac{1}{(1+ |-\varepsilon k2^{-j}l_1 + \varepsilon 2^{-j/2} l_2|)^{2r}}  \nonumber\\
&\leq \sum \limits_{ j=0}^{J-1} \sum \limits_{\substack{ k = - 2^{j/2} \\k \neq 0}}^{ 2^{j/2}} \sum \limits_{|l_2| > M_2}  C_2^2  \frac{1}{\varepsilon|k2^{-j}|} \frac{1}{(1+|\varepsilon 2^{-j}| | 2^{j/2}l_2/k|)^{2r}} \nonumber \\
&\leq \sum \limits_{ j=0}^{J-1} \sum \limits_{\substack{ k = - 2^{j/2} \\k \neq 0}}^{ 2^{j/2}}  C_2^2   \frac{2^j}{\varepsilon |k|} \frac{|k|2^{j/2}}{\varepsilon}\frac{1}{(1+|\frac{S}{\eps} 2^{J'(1+\delta)-j/2}/k|)^{2r-1}} \nonumber \\
&= \sum \limits_{ j=0}^{J-1} \sum \limits_{\substack{ k = - 2^{j/2} \\k \neq 0}}^{ 2^{j/2}} C_2^2 \frac{2^{3/2j}}{\eps^2} \frac{1}{(1+|\frac{S}{\eps} 2^{J(1+\delta)-j/2}/k|)^{2r-1}} \nonumber \\
&\leq C_2^2 \frac{2^{2J}}{\eps^2} \frac{1}{(1+|\frac{S}{\eps}2^{J\delta}|)^{2r-1}} . \label{eq:ProofLemmaFreq11}
\end{align}
Since
\begin{align*}
\sum \limits_{|l_2| < M_2} \frac{1}{(1+ |-\varepsilon k2^{-j}l_1 + \varepsilon 2^{-j/2} l_2|)^{2r}} \lesssim \int_{\R} \frac{1}{(1+ |-\varepsilon k2^{-j}l_1 + \varepsilon 2^{-j/2} x|)^{2r}} \, d x \lesssim \frac{2^{j/2}}{\varepsilon}
\end{align*}
we have that
\begin{align}
\sum \limits_{|l_2| < M_2}\sum \limits_{|l_1|>M_1} &\sum \limits_{ j=0}^{J-1} \sum \limits_{\substack{ k = - 2^{j/2} \\k \neq 0}}^{ 2^{j/2}} C_2^2  \frac{1}{(1+|\varepsilon 2^{-j}l_1|)^{2r}} \frac{1}{(1+ |-\varepsilon k2^{-j}l_1 + \varepsilon 2^{-j/2} l_2|)^{2r}}  \nonumber \\
& \leq \sum \limits_{ j=0}^{J-1} \sum \limits_{\substack{ k = - 2^{j/2} \\k \neq 0}}^{ 2^{j/2}} C_2^2 \frac{2^{3/2j}}{\eps^2} \frac{1}{(1+|\frac{S}{\eps} 2^{J(1+\delta)-j}|)^{2r-1}} \nonumber \\
& \leq C_2^2 \frac{2^{2J}}{\eps^2} \frac{1}{(1+|\frac{S}{\eps} 2^{J\delta}|)^{2r-1}}. \label{eq:ProofLemmaFreq12}
\end{align}
Finally, the last sum can be bounded as in (\ref{eq:ProofLemmaFreq11}) by first using Lemma \ref{lemma:Grafakos} again
\begin{align}
\sum \limits_{|l_2| > M_2}\sum \limits_{|l_1|>M_1} &  \sum \limits_{ j=0}^{J-1} \sum \limits_{\substack{ k = - 2^{j/2} \\k \neq 0}}^{ 2^{j/2}} C_2^2  \frac{1}{(1+|\varepsilon 2^{-j}l_1|)^{2r}} \frac{1}{(1+ |-\varepsilon k2^{-j}l_1 + \varepsilon 2^{-j/2} l_2|)^{2r}} \nonumber \\
& \leq  \sum \limits_{ j=0}^{J-1} \sum \limits_{\substack{ k = - 2^{j/2} \\k \neq 0}}^{ 2^{j/2}} C_2^2 \frac{2^{3/2j}}{\eps^2} \frac{1}{(1+|S_\theta2^{J(1+\delta)-j}|)^{2r-1}} \nonumber \\
& \leq C_2^2 \frac{2^{2J}}{\eps^2} \frac{1}{(1+|\frac{S}{\eps} 2^{J\delta}|)^{2r-1}}. \label{eq:ProofLemmaFreq13}
\end{align}
hence, combining \eqref{eq:ProofLemmaFreq8}, \eqref{eq:ProofLemmaFreq9}, \eqref{eq:ProofLemmaFreq10}, \eqref{eq:ProofLemmaFreq11} \eqref{eq:ProofLemmaFreq12}, \eqref{eq:ProofLemmaFreq13} yields
\begin{align}
\mathrm{II} \leq C_2^2 \frac{2^{2J}}{\eps^2} \frac{1}{(1+|\frac{S}{\eps} 2^{J\delta}|)^{2r-1}}. \label{eq:ProofLemmaFreq14}
\end{align}
Regarding \textrm{III}, we obtain a similar estimate by performing the same computations as for $\mathrm{II}$, therefore
\begin{align}
\mathrm{III} \leq C_2^2 \frac{2^{2J}}{\eps^2} \frac{1}{(1+|\frac{S}{\eps} 2^{J\delta}|)^{2r-1}}. \label{eq:ProofLemmaFreq15}
\end{align}
Combining \eqref{eq:ProofLemmaFreq5}, \eqref{eq:ProofLemmaFreq14}, \eqref{eq:ProofLemmaFreq15} yields
\begin{align*}
	\mathrm{I} + \mathrm{II} + \mathrm{III} \leq C \frac{\varepsilon^{2r-3}}{(S2^{J(\delta-2/(2r-1)})^{2r-1}},
\end{align*}
with a constant that does not depend on $J$. Therefore, if $\delta \geq \frac{2}{2r-1}$ the result follows for $S$ greater than $C^{-1}(\frac{\varepsilon^{2r-3}}{\omega})^{1/(2r-1)}$.
\end{proof}

\subsection{Proof of Theorem \ref{maintheoremshearlets}}
The  proof of Theorem \ref{maintheoremshearlets} is a consequence of Proposition \ref{prop:EffFreqSupp}. 
\begin{proof}[Proof of Theorem \ref{maintheoremshearlets}]
Let $\theta >1$. Then we want to show
\begin{align}
	\inf_{\substack{ f \in \Rcal_N \\ \| f \| =1}} \| P_{\Scal_M^{(\eps)}} f \| \geq  \frac{1}{\theta} \label{claim:ssrasymp1}
\end{align}
for appropriate $M$. For this, let $f \in \Rcal_N$ with $\| f \| =1$. We prove
\begin{align}
	\| P_{\Scal_M^{(\eps)}}^\perp  f \|^2 \leq  \frac{\theta^2-1}{ \theta^2} , \label{claim:1}
\end{align}
for the claimed $M$. Since $(s_\ell)_\ell$ is an orthonormal system, we have
\begin{align*}
	\| P_{\Scal_M^{(\eps)}}^\perp f \|^2 &=   \sum \limits_{l \in (I_M)^c} |\langle f , s_l^{(\eps)} \rangle |^2 ,
\end{align*}
where $(I_M)^c$ denotes the set complement of $I_M$ in $\Z^2$, in particular
\begin{align*}
	\sum \limits_{l \in (I_M)^c}& |\langle f , s_l^{(\eps)} \rangle |^2 
	 = \sum \limits_{ |l_2| > M_2}\sum \limits_{ |l_1| > M_1} |\langle f, s_l^{(\eps)} \rangle |^2 + \sum \limits_{ |l_2| < M_2}\sum \limits_{ |l_1| > M_1} |\langle f, s_l^{(\eps)} \rangle |^2 + \sum \limits_{ |l_2| > M_2}\sum \limits_{ |l_1| < M_1} |\langle f, s_l^{(\eps)} \rangle |^2.
\end{align*}
Therefore we have
\begin{align*}
	f= \sum_{\lambda \leq {N_{J}}} \langle f,S_N^{-1} \psi_\lambda \rangle \psi_\lambda 
	=  \sum \limits_{m' \in \Omega} \alpha_{m'} \phi_{m'} + \sum \limits_{(j,k,m)\in \Lambda_{J} } \beta_{j,k,m}\psi_{j,k,m} + \sum \limits_{(\widetilde{j},\widetilde{k}, \widetilde{m}) \in \widetilde{\Lambda}_{J}}  \gamma_{\widetilde{j},\widetilde{k},\widetilde{m}} \psitilde_{\widetilde{j},\widetilde{k},\widetilde{m}}.
\end{align*}
with
\begin{align*}
	\alpha_{m'} = \langle \phi_{m'}, s_l \rangle, \quad
	\beta_{j,k,m} = \langle \psi_{j,k,m}, s_l \rangle, \quad
	\gamma_{j,k,m} = \langle \psitilde_{\widetilde{j},\widetilde{k},\widetilde{m}}, s_l \rangle.
\end{align*}
Furthermore,
\begin{align*}
 \sum \limits_{m' \in \Omega} \left|\alpha_{m'} \right|^2  + \sum \limits_{(j,k,m)\in \Lambda_{J} } \left|\beta_{j,k,m}\right|^2 +  \sum \limits_{(\widetilde{j},\widetilde{k}, \widetilde{m}) \in \widetilde{\Lambda}_{J}}  \left|\gamma_{\widetilde{j},\widetilde{k},\widetilde{m}} \right|^2 = \sum_{\lambda \leq {N_{J}}} |\langle S_N^{-1} f, \psi_\lambda \rangle |^2 
 \leq \frac{1}{A_N} \| f \| ^2.
\end{align*}
Therefore by Cauchy Schwarz
\begin{align*}
	\sum \limits_{l \in (I_M)^c}&  |\langle f, s_l \rangle |^2 \nonumber \\
	& = \sum \limits_{l \in (I_M)^c} \bigg|\langle \sum \limits_{m' \in \Omega} \alpha_{m'} \phi_{m'} + \sum \limits_{(j,k,m)\in \Lambda_{J} } \beta_{j,k,m}\psi_{j,k,m} + \sum \limits_{(\widetilde{j},\widetilde{k}, \widetilde{m}) \in \widetilde{\Lambda}_{J}}  \gamma_{\widetilde{j},\widetilde{k},\widetilde{m}} \psitilde_{\widetilde{j},\widetilde{k},\widetilde{m}}, s_l \rangle \bigg|^2 \nonumber \\
	& = \sum \limits_{l \in (I_M)^c} \bigg| \sum \limits_{m' \in \Omega} \alpha_{m'} \widehat{\phi_{m'}}(\varepsilon l) + \sum \limits_{(j,k,m)\in \Lambda_{J} } \beta_{j,k,m}\widehat{\psi_{j,k,m}}(\varepsilon l) + \sum \limits_{(\widetilde{j},\widetilde{k}, \widetilde{m}) \in \widetilde{\Lambda}_{J}}  \gamma_{\widetilde{j},\widetilde{k},\widetilde{m}} \widehat{\psitilde_{\widetilde{j},\widetilde{k},\widetilde{m}}}(\varepsilon l) \bigg|^2 \nonumber \\
	&\leq  \sum \limits_{l \in (I_M)^c} \frac{1}{A_N} \Bigg( \sum \limits_{m' \in \Omega} \left| \widehat{\phi_{m'}}(\varepsilon l)\right|^2  + \sum \limits_{(j,k,m)\in \Lambda_{J} } \left| \varepsilon \widehat{\psi_{j,k,m}}(\varepsilon l) \right|^2+   \sum \limits_{(\widetilde{j},\widetilde{k}, \widetilde{m}) \in \widetilde{\Lambda}_{J}} \left| \varepsilon \widehat{\phi_{m'}}(\varepsilon l)\right|^2 \Bigg).
\end{align*}
By Proposition \ref{prop:EffFreqSupp} there exists $S(\theta, r, \varepsilon) $ so that we can choose
$$
M_i = \left\lceil S(\theta,r, \varepsilon) 2^{J(1+\delta)}A_N^{\frac{1}{2r-1}} \right\rceil \in \N, \quad i =1,2
$$
 to conclude
\begin{align*}
	\sum \limits_{l \in (I_M)^c}  |\langle f, s_l \rangle |^2 \leq \frac{\theta^2-1}{\theta^2}
\end{align*}
which  shows \eqref{claim:1} and hence
\begin{align*}
	\| P_{\Scal_M^{\eps}} f \| \geq \frac{1}{\theta}.
\end{align*}
\end{proof}

\subsection{Proof of Corollary \ref{cor:ErrorEstimates}}

For the proof of Corollary \ref{cor:ErrorEstimates} we recall the following results. By Theorem \ref{theorem:OptimalSparseApprox} the best $N$-term approximation $f^s_N$ for shearlets of cartoon-like functions obeys
\begin{align*}
	\| f - f^s_N \| \lesssim N^{-1} (\log N)^{3/2},
\end{align*}
and for wavelets the best $N$-term approximation $f_N^w$ obeys
\begin{align*}
	\| f - f^w_N \| \lesssim N^{-1/2} ,
\end{align*}
see \cite{ShearletBook}. Furthermore, these bounds are sharp, i.e. there exist cartoon-like functions such that the above inequalities are becoming equalities up to a constant. 

Now, let $N \in \N, f_N^s, f_{\sigma(N)}^w, \Rcal_{N_J}^s, \Rcal_{\sigma({N_J})}^w$ be as in the corollary. By Theorem \ref{maintheoremshearlets} there exist for fixed $\theta >1$ an $M=(M_1,M_2)$ and $M'=(M_1', M_2')$ in $\N \times \N$ such that
\begin{align*}
	c_{{N_J},M} = \inf_{\substack{ f \in \Rcal_{{N_J}}^s \\ \| f \| = 1}} \| P_{\Scal_M^{(\eps)}} f \| \geq \frac{1}{\theta}
\end{align*}
and
\begin{align*}
	c_{\sigma({N_J}),M'} = \inf_{\substack{ f \in \Rcal_{\sigma({N_J})}^w \\ \| f \| = 1}} \| P_{\Scal_{M'}^{(\eps)}} f \| \geq \frac{1}{\theta}.
\end{align*}
Moreover, due to Theorem \ref{maintheoremshearlets} and the linear stable sampling rate for wavelets proven in \cite{AHP1, AHKM} we can choose $M_1 \cdot M_2$ and $M'_1 \cdot M'_2$ to be of order $\sigma(N_J)$.

Using Theorem \ref{thm:Existence} we obtain
\begin{align*}
	\| f - G^w_{\sigma({N_J}), M'}(f) \| \lesssim \| f - P_{\Rcal_{\sigma({N_J})}^w}(f) \| \lesssim \| f - f_{\sigma(N)}^w \|Ê\lesssim \sigma(N)^{-1/2}.
\end{align*}
Analogously, for shearlets we obtain using Theorem \ref{thm:Existence}
\begin{align*}
	\| f - G^s_{{N_J}, M}(f) \|^2 \lesssim \| f - P_{\Rcal_{{N_J}}^s}(f) \|^2 \lesssim \left \| f - \sum_{\lambda \in I_N} \langle f, \psi^d_\lambda \rangle \psi_\lambda \right\|^2 \lesssim \sum_{\lambda \notin I_N} |\langle f, \psi^d_\lambda \rangle|^2,
\end{align*}
where $(\psi^d_{\lambda})_\lambda$ is the dual shearlet system of $(\psi_\lambda)_\lambda$ and $I_N$ denotes the index set of the $N$ largest coefficients $(\langle f, \psi_\lambda)_\lambda)$ with respect to the best $N$-term approximation $f_N^s$.

Furthermore, if the regularity is sufficiently large, it was shown in \cite{ParLoc} that we can relate the analysis coefficients of the primal frame with those of the dual frame by the relation
\begin{align*}
	(\langle f, \psi_ \lambda \rangle)_\lambda = G^\dag (\langle f, \psi^d_\lambda \rangle)_\lambda,
\end{align*}
where $G^\dag$ is the pseudo inverse of the Gramian operator associated to the shearlet system $(\psi_\lambda)$. Therefore the claimed rate follows from the decay rate of the shearlet coefficients. Hence,
\begin{align*}
	\| f - G^s_{{N_J}, M}(f) \|^2 \lesssim \sum_{\lambda \notin I_N} |\langle f, \psi_\lambda \rangle|^2 \lesssim N^{-2} (\log N )^3
\end{align*}
which yields the result.

\section{Numerics}\label{section:Numerics}

\subsection{Inequality \eqref{eq:asymptotics}}

In light of Corollary \ref{cor:ErrorEstimates} we want to compare the bounds $N^{-1}(\log N)^{3/2}$ and $\sigma(N)^{-1/2}$ with $\sigma$ defined as in \eqref{eq:sigmafunc} for $N \in \N$. We therefore aim to verify \eqref{eq:asymptotics} by showing the following behavior of the lower frame bound $A_N$ 
\begin{align}
	N^{-(1-\delta)/2}(\log N)^{3/2} \lesssim A_N^{1/(2r-1)}. \label{eq:lowerFrameBound}
\end{align}
For convenience we will check \eqref{eq:lowerFrameBound} in levels, i.e. the lower frame bounds $A_N$ will always belong to full shearlet systems at a scale $J$. Further, we use the \textsc{Shearlab} package which is available at
\begin{center}
\texttt{http://www.shearlab.org}
\end{center}
to generate shearlet systems with respect to different levels and their respective lower frame bounds. We wish to mention that both objects are available in the implementation.

Now, note that a shearlet system consists of approximately $2^{2J}$ many elements for a fixed scale $J$ where we ignore constants that are independent of $J$. We will numerically test \eqref{eq:asymptotics} by plotting the lower frame bounds of a full shearlet system for increasing $J$. This yields a monotonically decreasing curve as the scale increase. This will now be compared to the decay of the function $f(N)=N^{-(1-\delta)/2}(\log N)^{3/2}$ with $N = 2^{2J}$ for different values of $r$ and $\delta$ and for increasing $J$.

The informed reader might notice, that choosing $N = 2^{2J}$ is significantly  moderate, since the constant which determines the number of shearlets at scale $J$ is in practice much larger than 1. However, for larger $N$ the function $f$ decreases even faster. Moreover, since we are only interested in an asymptotic behavior of the lower frame bound with respect to increasing scales this is not a restriction. Indeed, already for that case it is strongly indicated that \eqref{eq:asymptotics} hold asymptotically as depicted in Figure \ref{fig:asymptotics}.

\begin{figure}[htp]
\begin{subfigure}{0.5\textwidth}
\hspace*{-1cm}
	\includegraphics[scale=.6]{./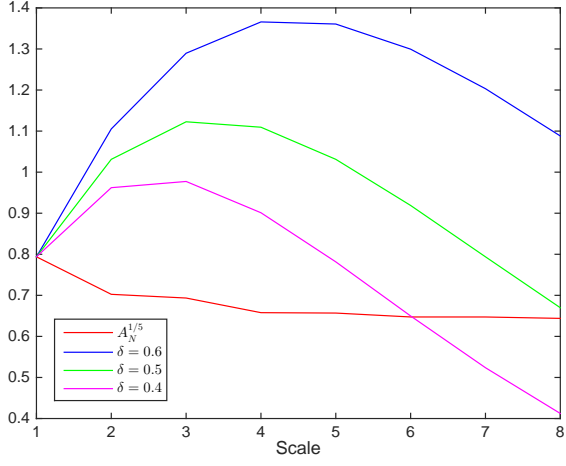}\hspace*{-1.2cm}\label{fig:Ratio1}
	\caption{$r = 3$ and $\delta = 0.4, 0.5, 0.6$}
\end{subfigure}
\begin{subfigure}{0.5\textwidth}
\hspace*{-.5cm}
	\includegraphics[scale=.6]{./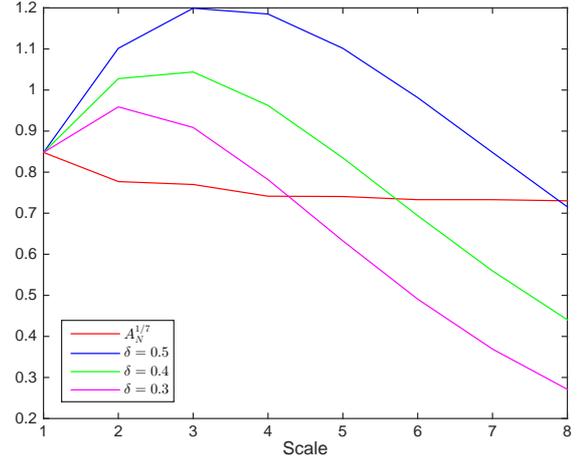}\hspace*{-0.8cm}\label{fig:Ratio2}
	\caption{$r = 4$ and $\delta = 0.3, 0.4, 0.5$}
\end{subfigure}

\begin{subfigure}{0.5\textwidth}
\hspace*{-1cm}
	\includegraphics[scale=.6]{./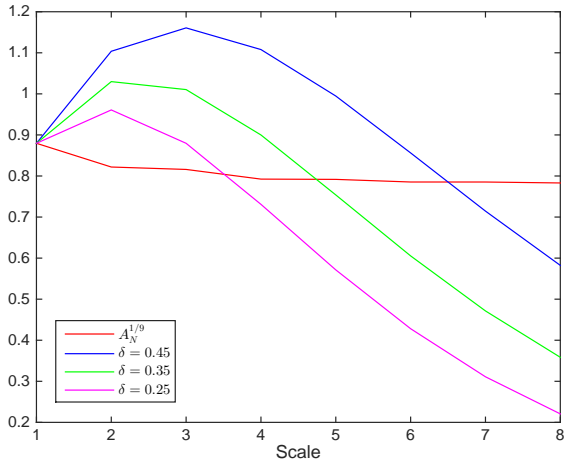}\hspace*{-1.2cm}\label{fig:Ratio3}
	\caption{$r = 5$ and $\delta = 0.25, 0.35, 0.45$}
\end{subfigure}
\begin{subfigure}{0.5\textwidth}
\hspace*{-.5cm}
	\includegraphics[scale=.6]{./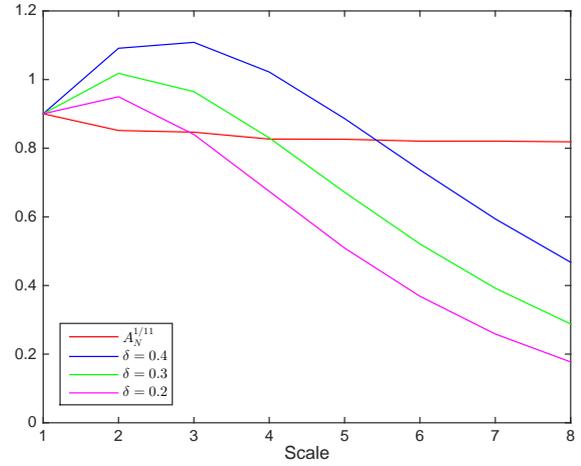}\hspace*{-0.8cm}\label{fig:Ratio4}
	\caption{$r = 6$ and $\delta = 0.2, 0.3, 0.4$}
\end{subfigure}
	\caption{Plot of the behavior of lower frame bound and the critical function $N^{-(1-\delta)/2}(\log N)^{3/2}$ for different values of $r$ and $\delta$}\label{fig:asymptotics}
\end{figure}

\subsection{Reconstruction from MR data using shearlets}

We further provide some reconstructions from MR data using complex exponentials (Fourier inversion), compactly supported Daubechies wavelets and compactly supported shearlets. Although our main result guarantees stable and convergent reconstructions when the sampling rate is almost linear it is not efficient (and also not necessary) to acquire that many samples in practice. In our numerics we will subsample the Fourier data to achieve practical relevance. All computation are done in \textsc{Matlab}.

\subsection{Data}

The underlying Fourier data or also called \emph{k-space} was acquired using a multi-channel acquisition consisting of four channels. Each of the four k-spaces has a $128 \times 128$ image resolution. Moreover, each channel data results in a single image of same size, e.g. by applying an inverse Fourier transform, see Figure \ref{Subfig:4chFourier} and Figure \ref{Subfig:4ch}. 

The  single channel images are combined by using the sum-of-squares method \cite{SENSE}.  The resulting sum-of-squares image from all four channels of the original k-space is used as the reference image, cf. Figure \ref{Subfig:Orig1Brain128}.

\begin{figure}[H]
        \begin{subfigure}[b]{0.5\textwidth}
                	\centering
		\includegraphics[scale=0.36]{./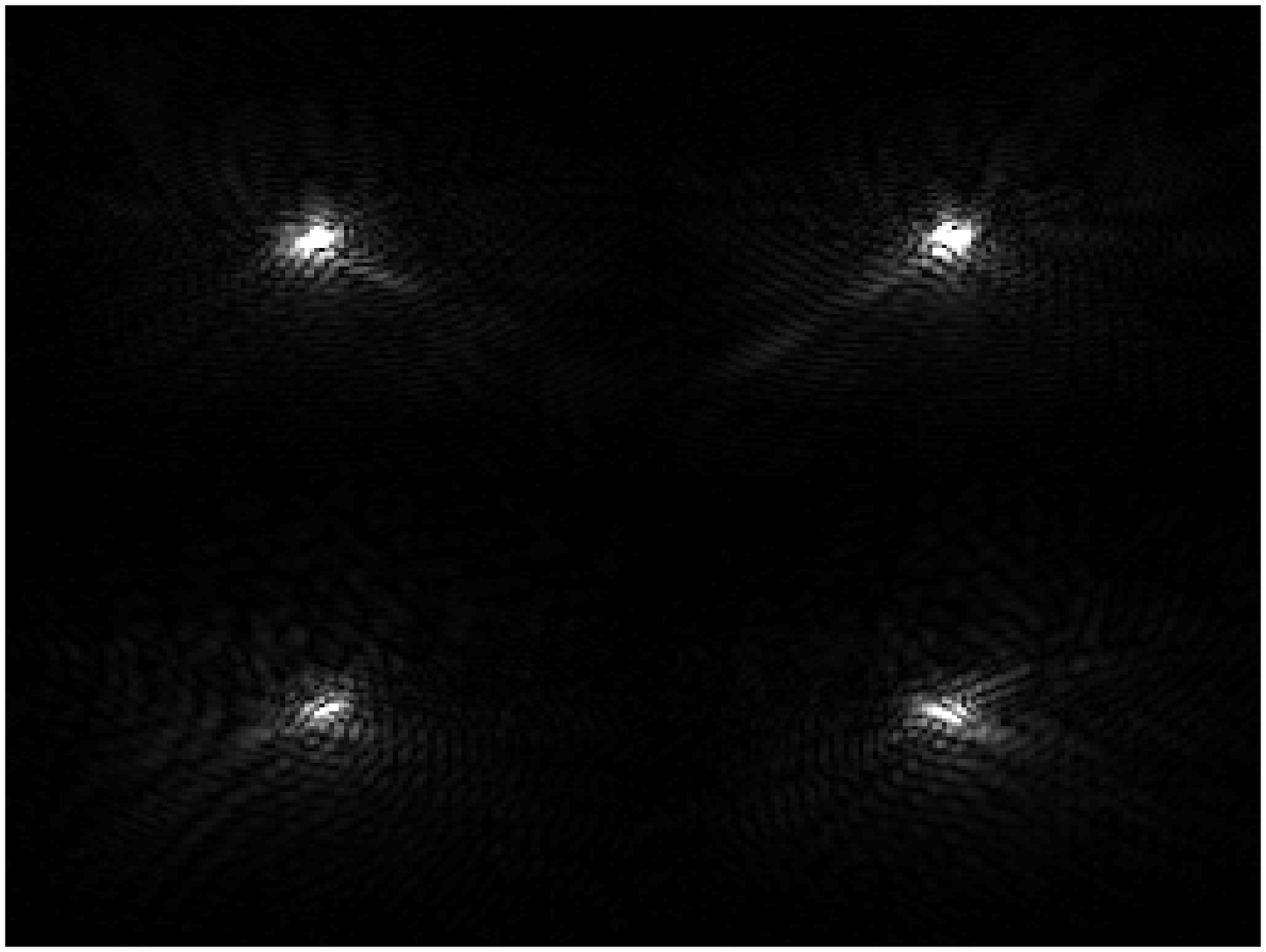}
		\caption{4 channels input data}\label{Subfig:4chFourier}
        \end{subfigure}        
        \begin{subfigure}[b]{0.5\textwidth}
                	\centering
		\includegraphics[scale=0.36]{./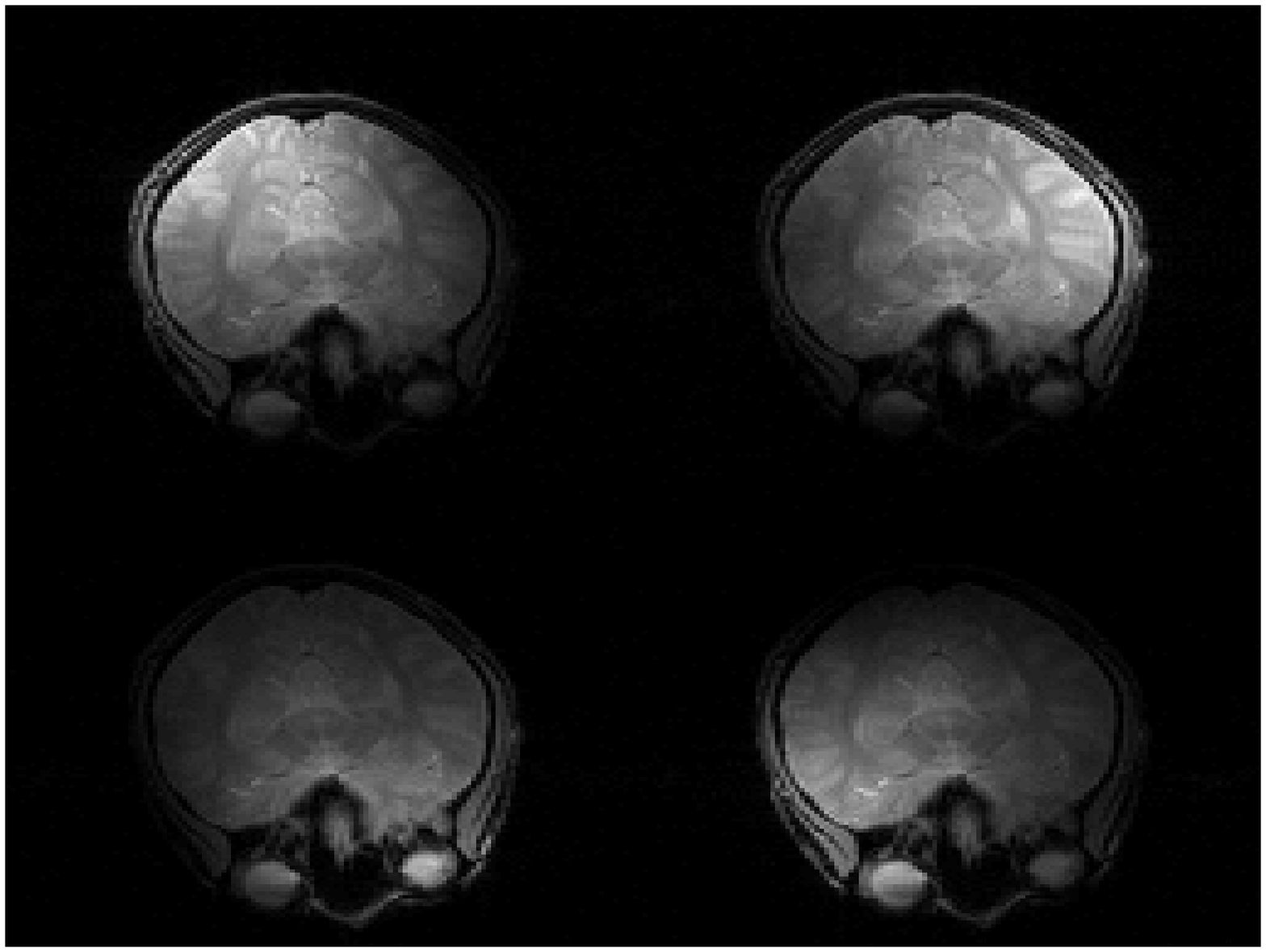}
		\caption{4 channel reconstructions}\label{Subfig:4ch}
        \end{subfigure}        
\begin{center}
        \begin{subfigure}[b]{0.5\textwidth}
                	\centering
		\includegraphics[scale=0.45]{./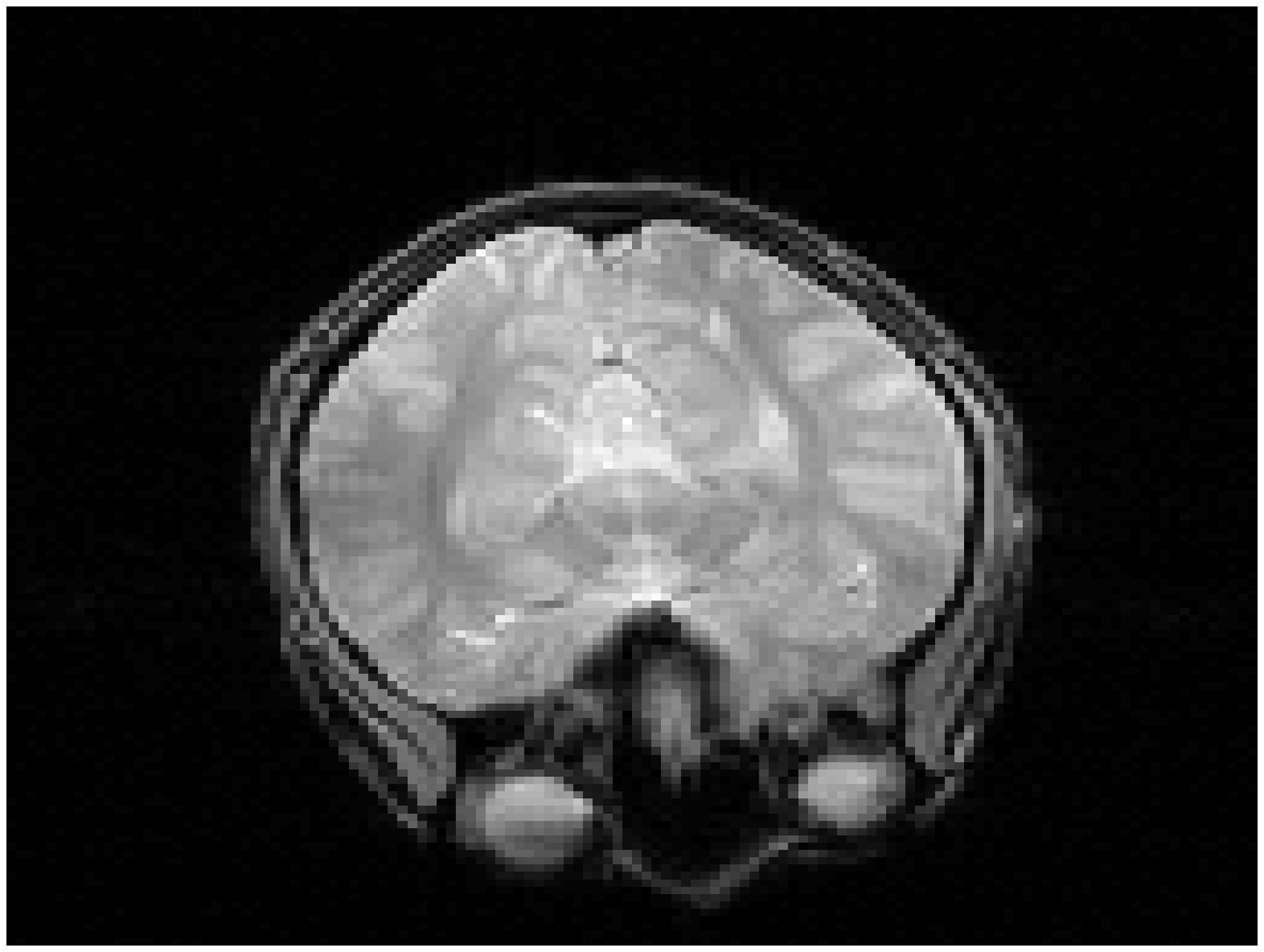}
		\caption{Combined image by sum-of-squares}\label{Subfig:Orig1Brain128}
\end{subfigure}
\end{center}
        \caption{Reference images computed with complete data}\label{fig:reference}
\end{figure}

\subsection{Methods}

We mimic the acceleration of factor 5 by using a $0-1$ mask to subsample the k-space, cf. Figure \ref{Subfig:Spiral} and Figure \ref{Subfig:Radial}. We have chosen a phyllotaxis spiral which is also used in \cite{WanZenErtMueNad} and a standard radial subsampling scheme.

The final reconstruction is obtained by computing each channel image separately and then combine it by using the \emph{sum-of-squares method} \cite{SENSE}. More precisely, if $I_k$ denotes the $k$-th image reconstructed from the $k$-th k-space then the final image $I$ is computed by
\begin{align*}
	I = \sqrt{ |I_1| + \ldots + |I_N|^2},
\end{align*}
where $N$ is the total number of channels and the the square operation, as well as the modulus is to be understood entry wise for the matrices $I_k$.

The reconstruction of the single channel images are computed as follows.
\begin{itemize}
\item Shearlets: 

The reconstructions are obtained by solving the following unconstrained minimization problem
\begin{align*}
  \min_{u} \frac{1}{2} \|\Fcalt u  - b \|_2^2 + \lambda \| \Scal \Hil(u) \|_1,  
\end{align*}
where $\Fcalt$ is the subsampled Fourier operator, $b$ is the subsampled input data, $\Scal \Hil$ is the shearlet transform and $\lambda$ are some parameters larger than zero, and $u$ is the image. 

The penalty function $\| \Scal \Hil(u) \|_1$ enfores sparsity with respect to the shearlet system. The code for the shearlet transform is downloadable at
\begin{center}
\texttt{http://www.shearlab.org}
\end{center}
\item Wavelets: 

Reconstructions are computed using the  SparseMRI package from Lustig et al. \cite{LusDonPau} which is downloaded from
\begin{center}
\texttt{http://www.eecs.berkeley.edu/\textasciitilde{}mlustig/Software.html}
\end{center}
This method is a standard tool in MRI to compute wavelet sparsity-based reconstructions from Fourier measurements.
\item Fourier: 

The reconstructions using complex exponentials are computed by taking the Fourier inversion of the subsampled data.
\end{itemize}

\subsection{Results}

The reconstructions show less artifacts and improved image quality using shearlets compared to the reconstruction obtained from compactly supported wavelets, cf. Figure \ref{Fig:Brain128}. Although the theory of optimal sparse approximation rates hold in a continuous setup, it is surprising to see such strong differences for images of such low resolutions such as $128 \times 128$.

\begin{table}[h]
\centering
\begin{tabular}{@{}|c|c|c|c|}
\hline
&Shearlets& Wavelets & Fourier inversion\\
\hline 
Spiral mask&0.638& 0.1006& 0.2570\\
\hline
Radial mask&0.0847& 0.1282& 0.2690\\
\hline
\end{tabular}
\caption{Relative error for Figure \ref{Fig:Brain128}}\label{table:Error}
\end{table}

\begin{figure}[H]
        \begin{subfigure}[b]{0.5\textwidth}
                	\centering
		\includegraphics[scale=0.45]{./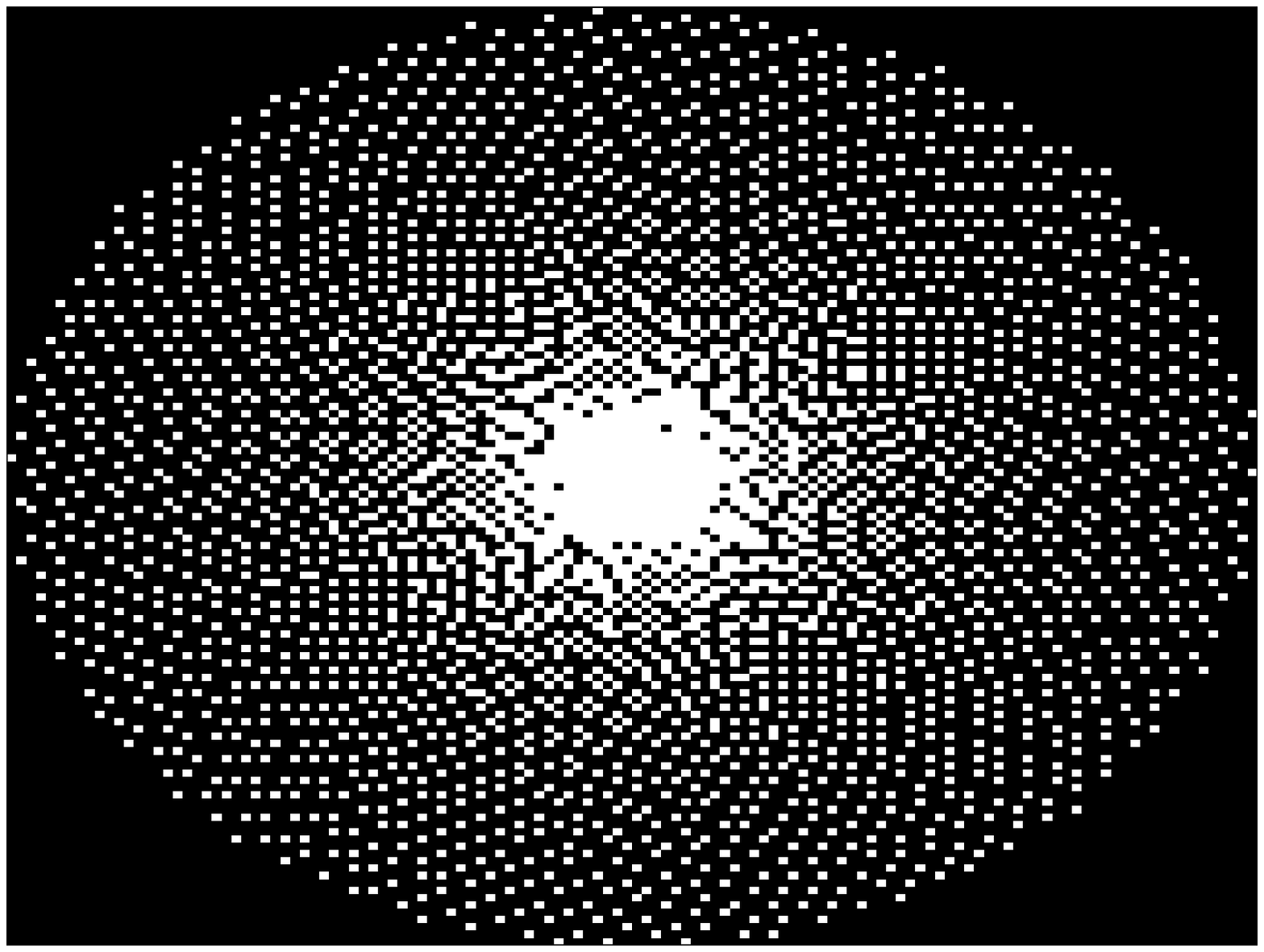}
		\caption{Phyllotaxis spiral mask: 20.37 \%}\label{Subfig:Spiral}
        \end{subfigure}        \quad
        \begin{subfigure}[b]{0.5\textwidth}
                	\centering
		\includegraphics[scale=0.45]{./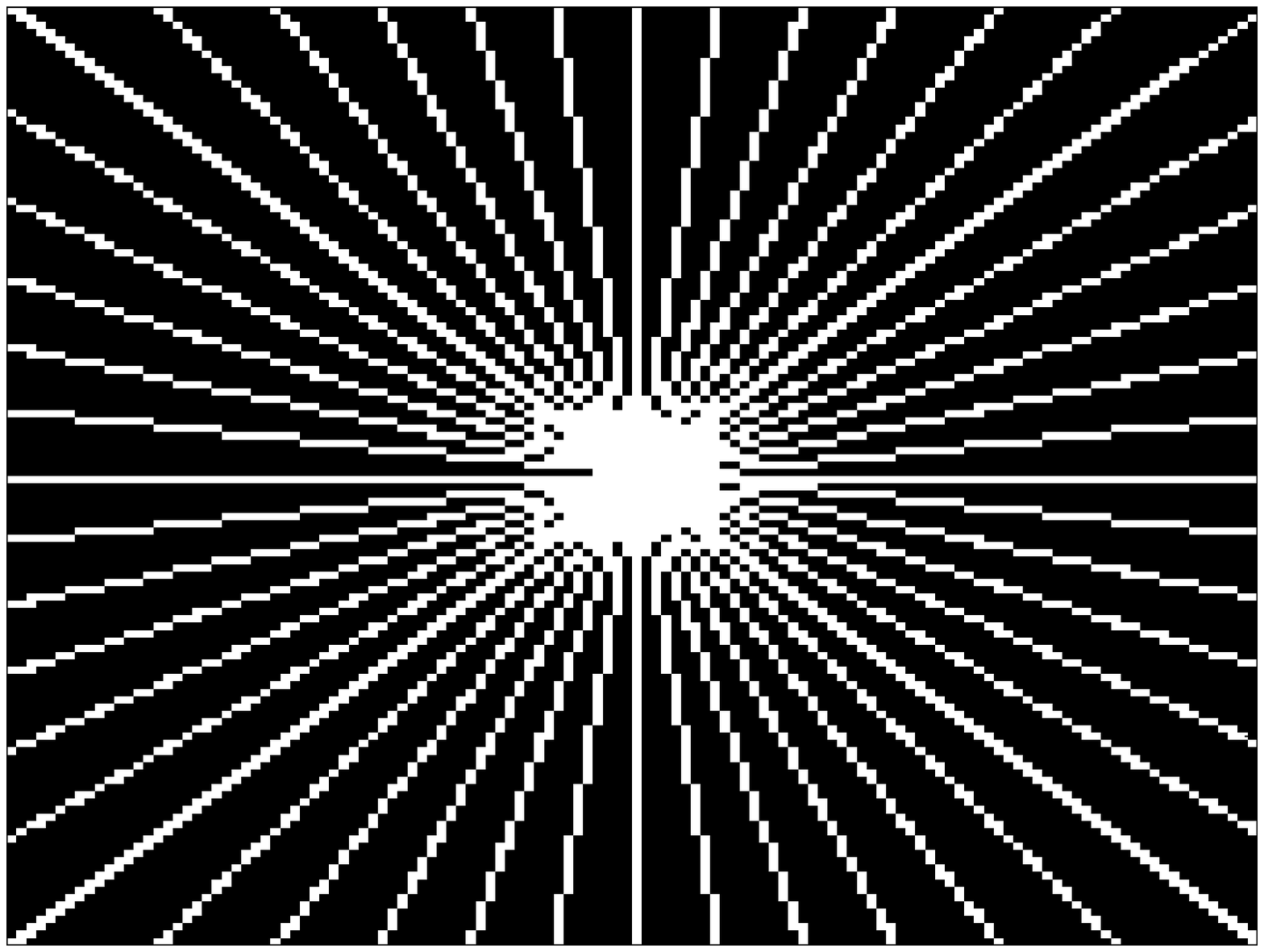}
		\caption{Radial mask: 20.74\%}\label{Subfig:Radial}
        \end{subfigure}
        
	\begin{subfigure}[b]{0.5\textwidth}
		\centering
		\includegraphics[scale=0.45]{./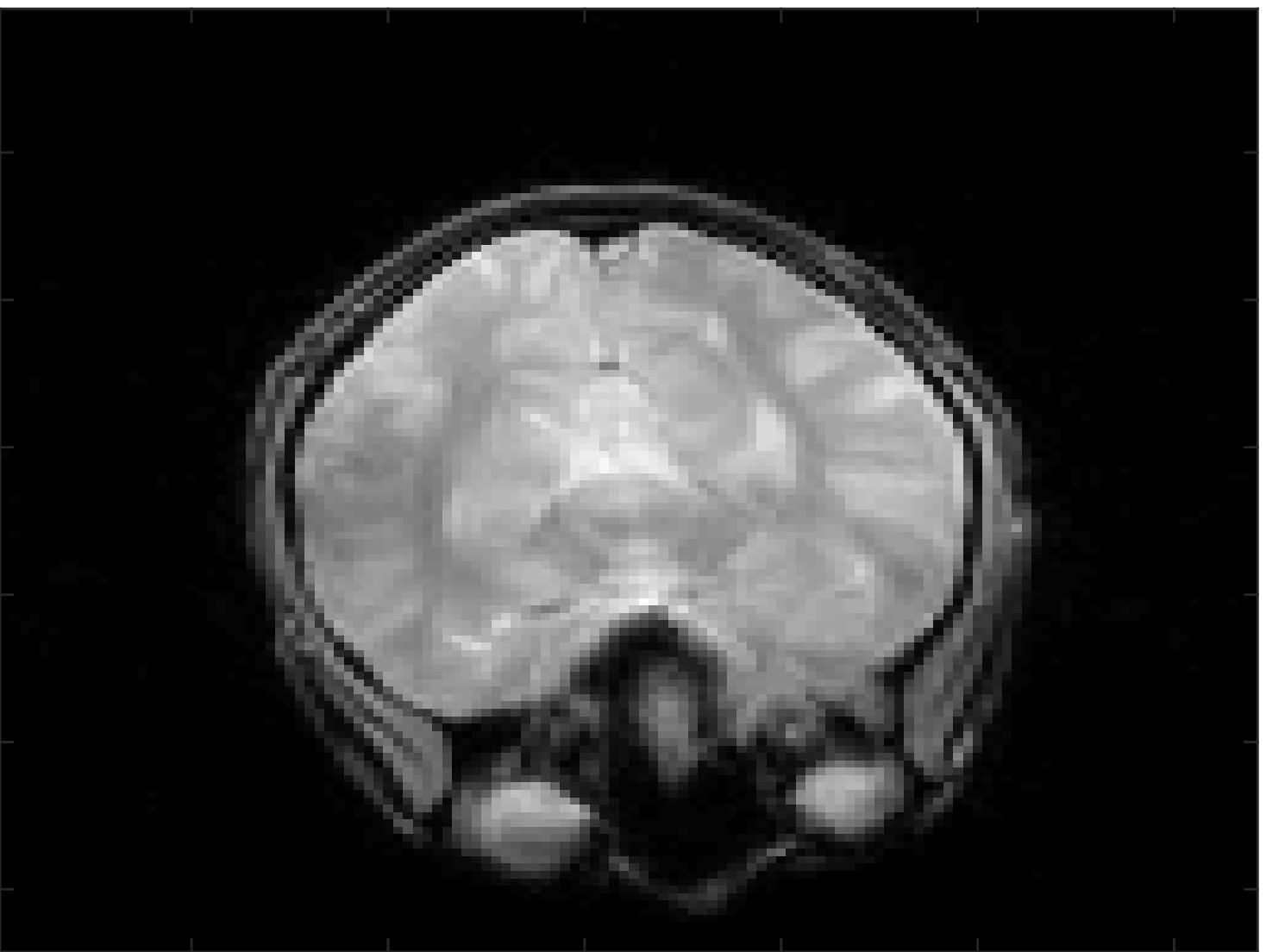}
		\caption{Reconstruction: shearlets, spiral}\label{Subfig:shearSp}
        \end{subfigure} \quad
        \begin{subfigure}[b]{0.5\textwidth}
		\centering
		\includegraphics[scale=0.45]{./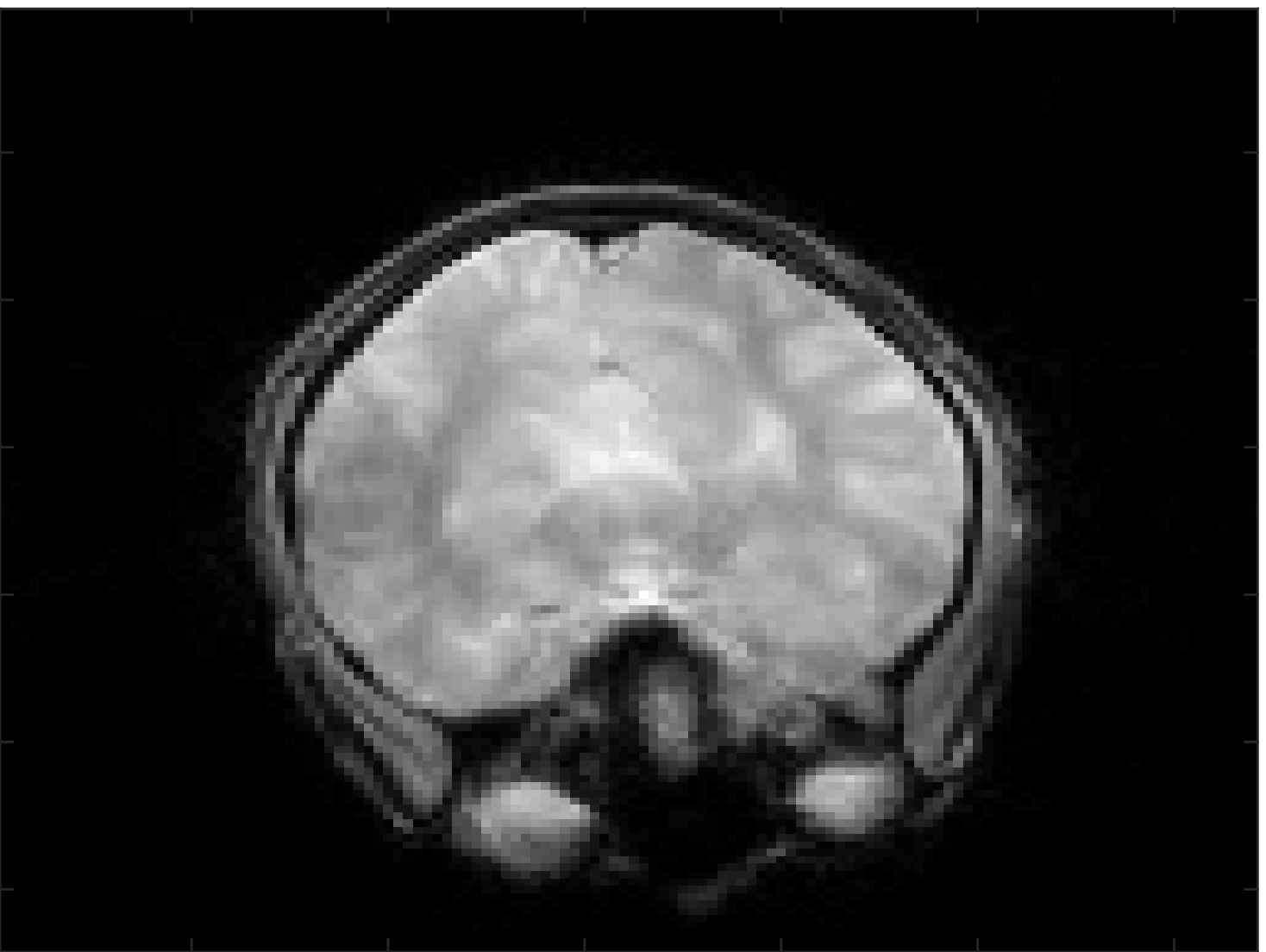}
		\caption{Reconstruction: shearlets, radial }\label{Subfig:shearRa}
        \end{subfigure}
        
	\begin{subfigure}[b]{0.5\textwidth}
		\centering
		\includegraphics[scale=0.45]{./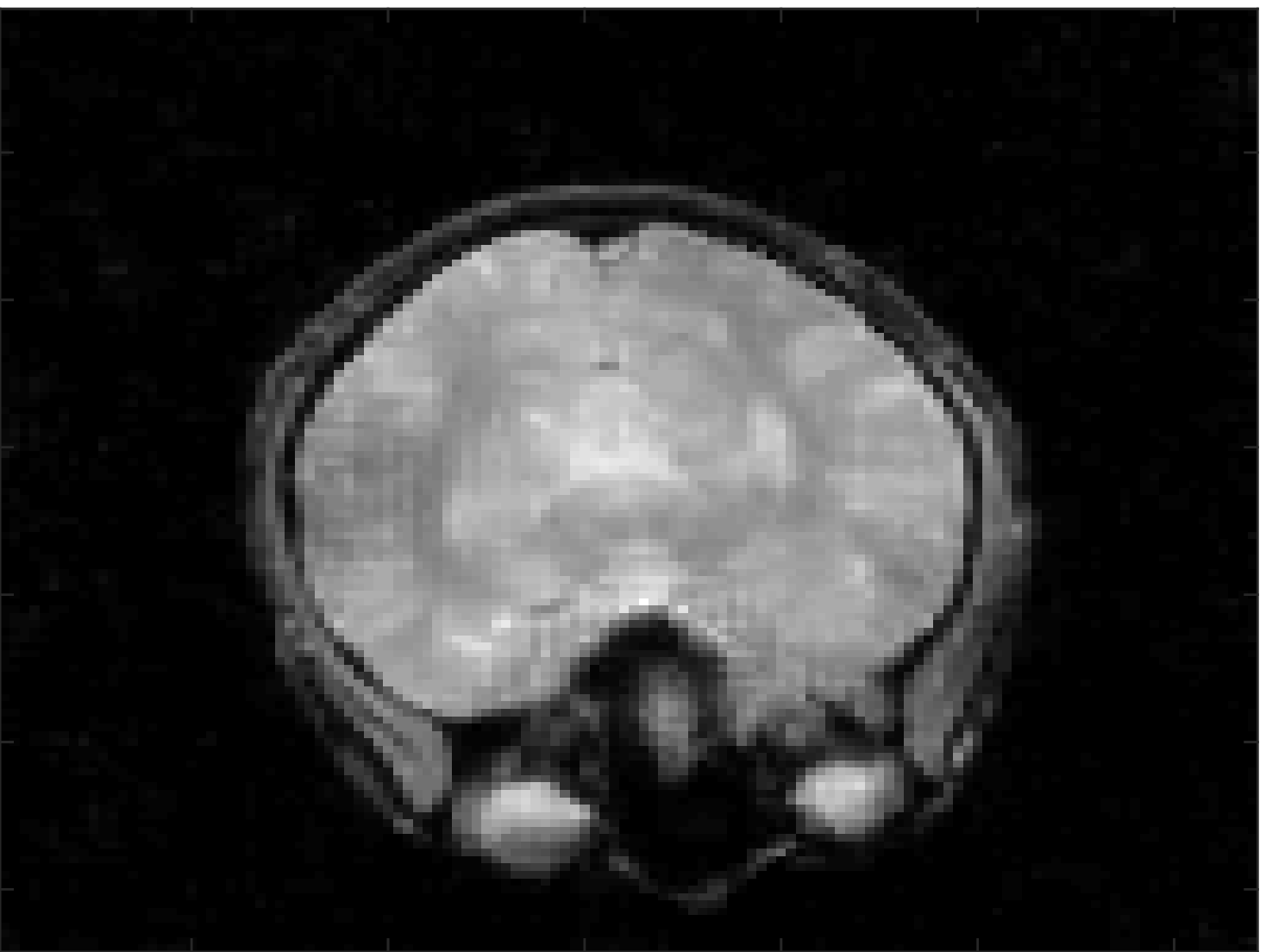}
		\caption{Reconstruction: wavelets, spiral}\label{Subfig:waveSp}
        \end{subfigure} \quad
        \begin{subfigure}[b]{0.5\textwidth}
		\centering
		\includegraphics[scale=0.45]{./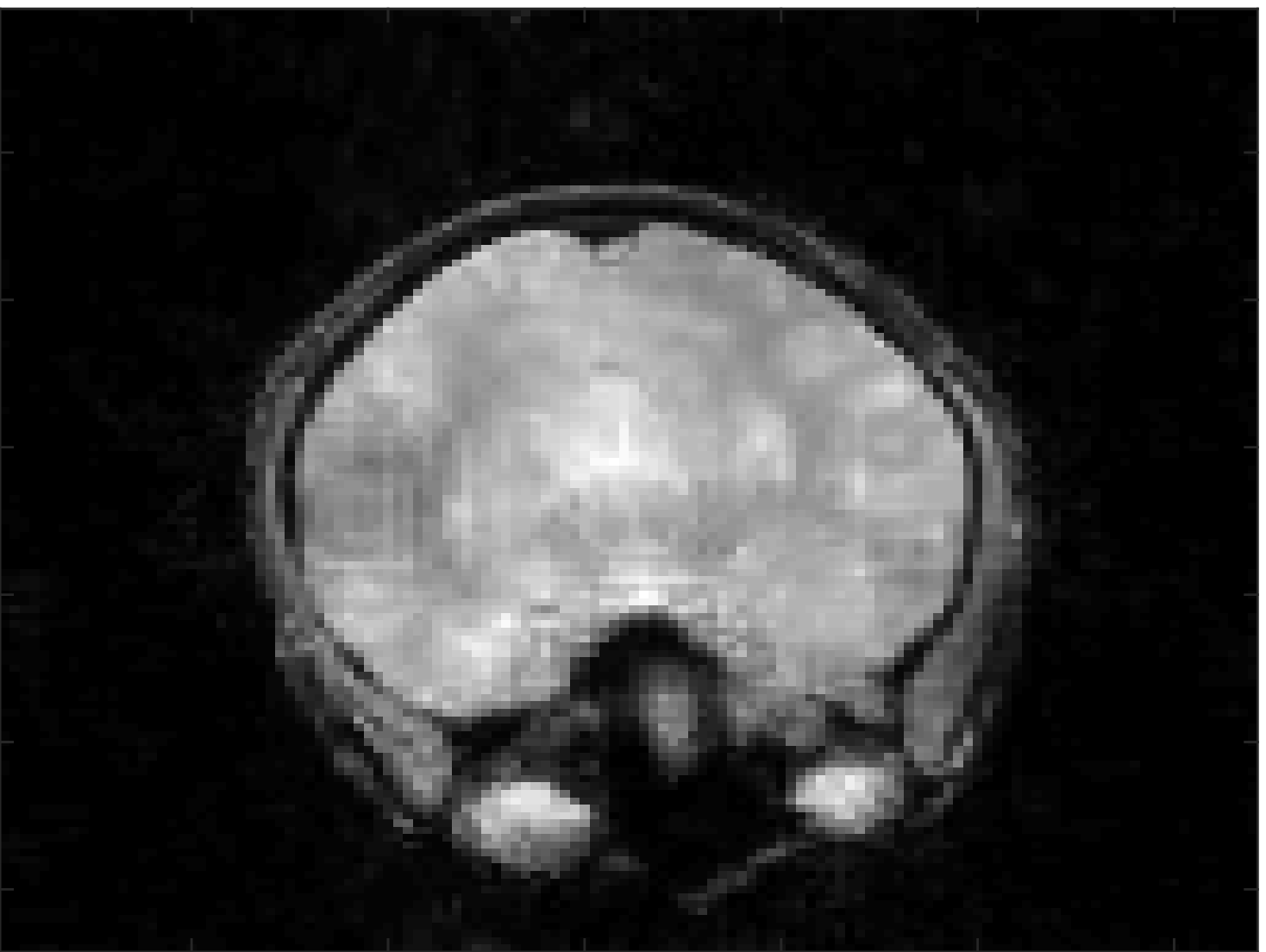}
		\caption{Reconstruction: wavelets, radial}\label{Subfig:waveRa}
        \end{subfigure}
                
	\begin{subfigure}[b]{0.5\textwidth}
		\centering
		\includegraphics[scale=0.45]{./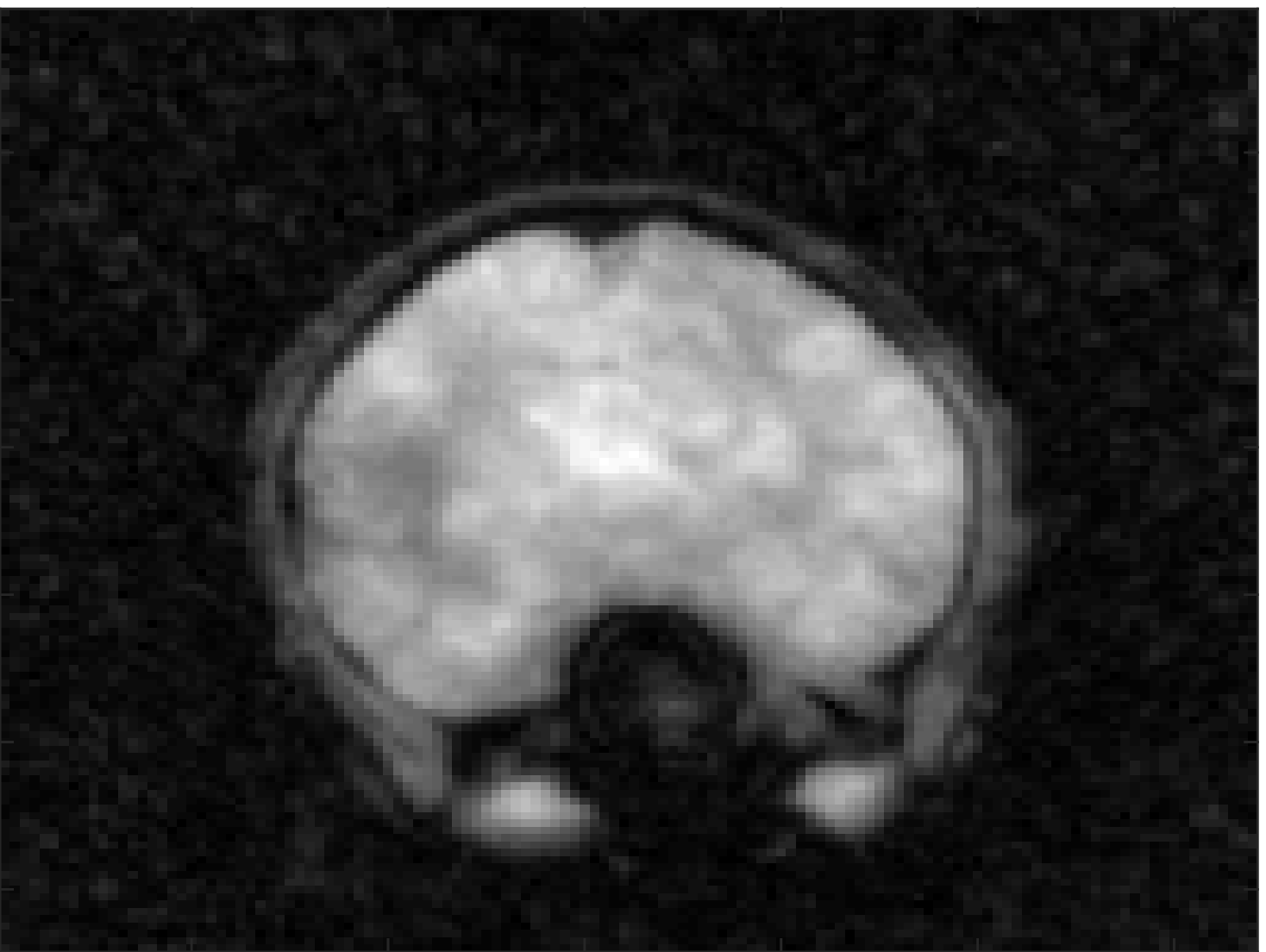}
		\caption{Reconstruction: Fourier inversion, spiral}\label{Subfig:fourierSp}
        \end{subfigure} \quad
        \begin{subfigure}[b]{0.5\textwidth}
		\centering
		\includegraphics[scale=0.45]{./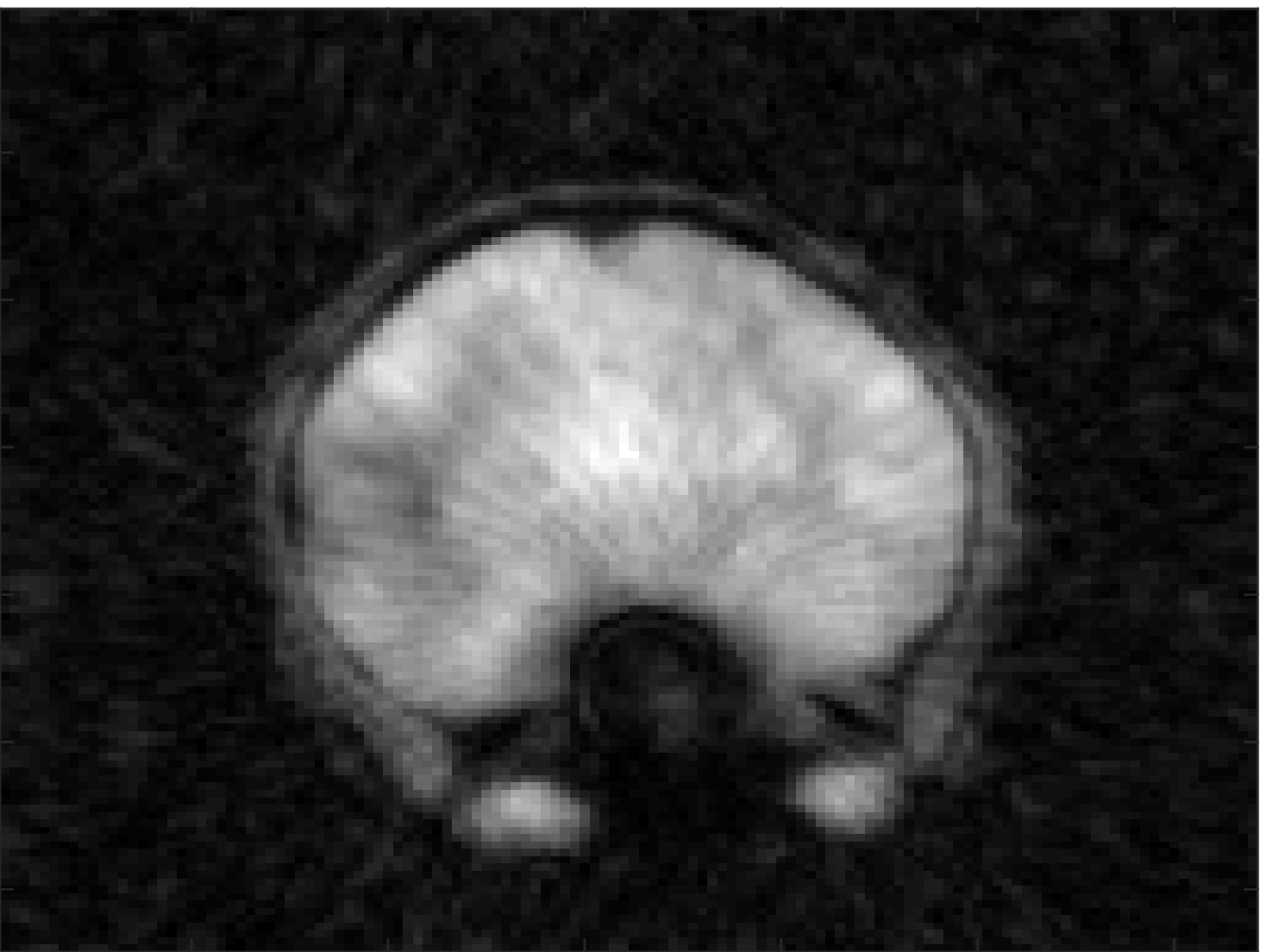}
		\caption{Reconstruction: Fourier inversion, radial}\label{Subfig:fourierRa}
        \end{subfigure}        
        \caption{Reconstruction using a spiral and radial mask.} \label{Fig:Brain128}
\end{figure}

\section{Future work}\label{sec:futurework}

We like to address multiple extensions and future directions of the work presented in this paper.

\subsection{Linear stable sampling rate}

In this paper we have established a stable sampling rate for sampling with respect to complex exponentials and recovery using compactly supported shearlet frames. Our rate essentially differs from the rate for wavelets by an additional $N^\delta$ term and the lower frame bound. Although it seems plausible that the rate should depend on the stability of the redundant system, measured by the lower frame bound, it is interesting to see, whetherthe could be improved to actual linearity up to a dependency of the lower frame bound, i.e. erasing $N^\delta$ while keeping $A_N$. Furthermore, efficient numerical algorithms to compute the stable sampling rate for shearlets are currently missing. This issue is left for future work.

\subsection{Nonuniform sampling}

Moreover, we have restricted ourselves to the case of a uniform sampling pattern, in fact, the samples were assumed to be acquired on a grid. Although this is possible in practice, it is very unusual. Many MRI acquisition schemes are nowadays non-uniform schemes, e.g. spiral patterns or radial patterns are often used, see Figure \ref{Subfig:Spiral} and Figure \ref{Subfig:Radial}. In \cite{AGH1, AGH2} the authors introduced \emph{non-uniform generalized sampling} and proved stable reconstructions are possible when using \emph{folded, periodic,} and \emph{boundary} wavelets. We strongly believe that the methods used in the present paper can be used to obtain similar results for shearlets in the non-uniform case as well. This is will be studied in an upcoming work.

\subsection{Parabolic molecules}

Shearlets are a special type of \emph{parabolic molecules} \cite{ParMol}. It is therefore an interesting question whether our results can be extended to such systems. We explain briefly how such generalizations can be tackled.

First of all, the reconstruction space $\Rcal_N$ is spanned by the first $N$ elements of the representation system. In our case of compactly supported shearlets it was build by considering all atoms whose support intersect a region of interest. If one considers atoms that are not compactly supported, then one has to make a careful choice of ordering the representation system and select the first $N$ elements of the system. However, the argument for when a reconstruction from Fourier measurements is possible, is solely based on very good frequency localization of the atoms. Thus this argument is applicable to other systems such as wavelet frames or general parabolic molecules. However, for a qualitative judgement of the results one needs to relate the lower frame bound of the finite system to the approximation rate, similar as we did in \eqref{eq:asymptotics}.

\subsection{Compressed sensing}

Another perspective that is indispensable for reconstruction problems from Fourier measurements such as in MRI is \emph{compressed sensing} cf. \cite{CanRomTao, Don}. Moreover, the new concepts of \emph{asymptotic sparsity} and \emph{asymptotic incoherence} introduced in \cite{AdcHanPooBog} are perfectly suited for shearlets, in fact natural images are asymptotically sparse in shearlets and it can be shown that the cross gramian between complex exponentials and shearlets is asymptotically incoherent. Further investigations in this direction are also possivle for future work.

As shown in the numerics, shearlets have a great potential in recovering real medical images from their subsampled Fourier data. The development of a sophisticated $\ell^1$ algorithm is therefore of high importance. A mathematical analysis and a comparison of different recovery algorithms will be provided in an upcoming paper.

\section*{Acknowledgements}
The author would like to thank Gitta Kutyniok for helpful discussions and the anonymous reviewers for critical comments that greatly improved the quality of the paper. Moreover, the author thanks Dr. Sina Straub from German Cancer Research Center (DKFZ) for providing the MRI data and Dr. Matthias Dieringer from Siemens AG, Healthcare Sector for providing a code to read the raw data into MATLAB. Further, the author acknowledges support from the Berlin Mathematical School as well as the DFG Collaborative Research Center TRR 109 "Discretization in Geometry and Dynamics". 

\bibliographystyle{abbrv}
\bibliography{ShearletGSRef-1}

\end{document}